\colorlet{shadecolor}{gray!20} 
\definecolor{mygreen}{rgb}{0,0.6,0}
\newtheorem{remark}[theorem]{Remark}
\def\sym{\mathrm{sym}}
\def\tr{\mathrm{tr}}
\def\i{\mathrm{i}}
\def\Rbkk{{\mathbb{R}^{2k\times 2k}}}
\def\Rbnk{{\mathbb{R}^{2n\times 2k}}}
\def\Rbnn{{\mathbb{R}^{2n\times 2n}}}
\def\Spkn{{\mathrm{Sp}(2k,2n)}}
\def\Spn{{\mathrm{Sp}(2n)}}
\def\Spk{{\mathrm{Sp}(2k)}}
\def\SPD{\mathcal{SPD}}
\def\diag{\mathrm{diag}}
\def\OrS{\mathrm{OrSp}}
\def\calI{\mathcal{I}}
\def\sp{\mathrm{span}}
\def\R{\mathbb{R}}
\newcommand{\skewset}{{\cal S}_{\mathrm{skew}}}
\newcommand{\symset}{{\cal S}_{\mathrm{sym}}}
\newcommand{\TX}{{\mathrm{T}_{X}}\Spkn}
\newcommand{\rgrad}[1]{\mathrm{grad}_\rho f(#1)}
\newcommand{\abs}[1]{\left|#1\right|}
\newcommand{\dkh}[1]{\left(#1\right)}
\newcommand{\jkh}[1]{\left\langle#1\right\rangle}
\title{Symplectic eigenvalue problem via Trace minimization and Riemannian optimization\thanks{This work was supported by the Fonds de la Recherche Scientifique – FNRS and the Fonds Wetenschappelijk Onderzoek – Vlaanderen under EOS Project no. 30468160. It was finished during a visit of the first author to Vietnam Institute for Advanced Study in Mathematics (VIASM) whose support was gratefully acknowledged.} }
\author{Nguyen Thanh Son\thanks{Department of Mathematics and Informatics, Thai Nguyen University of Sciences, 24118 Thai Nguyen, Vietnam (ntson@tnus.edu.vn).}
	\and 
	P.-A. Absil\thanks{ICTEAM Institute, UCLouvain, 1348 Louvain-la-Neuve, Belgium (pa.absil@uclouvain.be, gaobin@lsec.cc.ac.cn).}
	\and Bin Gao\footnotemark[3]
	\and Tatjana Stykel\thanks{Institute of Mathematics, University of Augsburg, 86159 Augsburg,  Germany (stykel@math.uni-augsburg.de).}	
}
\begin{document}
\maketitle

\begin{abstract} 	
We address the problem of computing the smallest 
symplectic eigenvalues and the corresponding eigenvectors of symmetric positive-definite matrices in  
the sense of Williamson's theorem.
It is formulated as minimizing a~trace cost function 
over the symplectic Stiefel manifold.
We first investigate various theoretical aspects of this optimization problem such as characterizing the sets of critical points, saddle points, and global minimizers as well as proving that non-global local minimizers do not exist. Based on our recent results on constructing Riemannian structures on the symplectic Stiefel manifold and the associated optimization algorithms, we then propose solving the symplectic eigenvalue problem in the framework of Riemannian optimization. Moreover, 
a~connection of the sought solution with the eigenvalues of a special class of Hamiltonian matrices is discussed. Numerical examples are presented.
\end{abstract}

\begin{keywords} 
Symplectic eigenvalue problem, Williamson's diagonal form, trace minimization, Rieman\-nian optimization,  symplectic Stiefel manifold, positive-definite Hamiltonian matrices
\end{keywords}

\begin{AMS}
15A15, 15A18, 70G45
\end{AMS}

\pagestyle{myheadings}
\thispagestyle{plain}
\markboth{N.T. SON, P.-A. ABSIL, B. GAO, AND T. STYKEL}{SYMPLECTIC EVP VIA RIEMANNIAN TRACE MINIMIZATION}

\section{Introduction}\label{Sec:Intro}
Given a~positive integer $n$, let us consider the matrix
\begin{equation*}
J_{2n} = \begin{bmatrix}0&I_n\\-I_n&0
\end{bmatrix} \in \Rbnn,
\end{equation*}
where $I_n$ denotes the $n\times n$ identity matrix.
A matrix $X\in \Rbnk$ with $k\leq n$ is said to be {\em symplectic} if it holds $X^TJ_{2n}X = J_{2k}$. Although the term ``symplectic" previously seemed to apply to square matrices only, it has recently been used for rectangular ones as well \cite{PengM16,GSAS20}. Note that $J_{2n}$ is orthogonal, skew-symmetric, symplectic, and sometimes referred to as the Poisson matrix  \cite{PengM16}. Symplectic matrices appear in a~variety of applications including quantum mechanics \cite{deGo06}, Hamiltonian dynamics \cite{HofeZ11,VanDSJ14}, systems and control theory  \cite{Fran87, HinrS91,LancR95} and optimization problems \cite{Fior16,BirtCC20}. 
The set of all symplectic matrices is denoted by $\Spkn$. When $k = n$, we write $\Spn$ instead of $\Spkn$. These matrix sets have a~rich geometry structure: $\Spkn$ is a~Riemannian manifold \cite{GSAS20}, also known as the symplectic Stiefel manifold, whereas $\Spn$ forms additionally 
a~noncompact Lie group \cite[Lemma~1.15]{Fome95}.

There are fundamental differences between symplectic and orthonormal matrices: notably, $\Spkn$ is unbounded \cite{GSAS20}. However, their definitions look alike (replacing $J$ by $I$ in the definition of symplectic matrices yields that of orthonormal ones) and several properties of orthonormal matrices have their counterparts for symplectic matrices, e.g., they have full rank and they form a submanifold. Of interest here is the diagonalization of symmetric positive-definite (spd) matrices. The fact that every spd matrix can be reduced by an orthogonal congruence to a diagonal matrix with positive diagonal elements 
is well-known and can be found in any standard linear algebra textbook. This problem is also called the eigenvalue decomposition as the diagonal entries of the diagonalized matrix are the eigenvalues of the given one. Its symplectic counterpart is known as Williamson's theorem~\cite{Will36} which states that for any  spd matrix $M\in\mathbb{R}^{2n\times 2n}$, there exists 
$S\in\Spn$ such that 
\begin{equation}\label{eq:sympleigdecomp}
S^TMS = \begin{bmatrix}
D & 0 \\ 0 & D
\end{bmatrix},
\end{equation}
where $D=\diag(d_1,\ldots,d_n)$ with positive diagonal elements.
This decomposition is referred to as 
Williamson's diagonal form or Williamson's normal form 
of $M$.  The values $d_i$ are called the {\em symplectic eigenvalues} of $M$, and the columns of $S$ form a~{\em symplectic eigenbasis} in 
$\mathbb{R}^{2n}$. Constructive proofs of Williamson's theorem can be found in \cite{SimoCS99,Part13,Ikra18}. Symplectic eigenvalues have wide applications in quantum mechanics and optics; they are important quantities to characterize quantum systems and their subsystems with Gaussian states \cite{Hiro06,Part13,KrbeTV14}. Especially, in the Gaussian marginal problem, knowledge on symplectic eigenvalues helps to determine local entropies which are compatible with a given joint state \cite{EiseTRS08}. 

The computation of standard eigenvalues is a well-established subfield in numeri\-cal linear algebra, see, e.g., \cite{Kres05,Watk07,Saad11} and many other textbooks related to matrix analysis and computations. Particularly, numerical methods based on optimization were extensively studied 
where either a matrix trace or Rayleigh quotient is minimized with some constraints. 
The generalized eigenvalue problems (EVPs) 
were investigated in \cite{SameW82,KovaV95,SameT00,LianLB13} using trace minimization. This approach was also applied to a special class of Hamiltonian matrices in the context of (generalized) linear response EVP \cite{BaiL12,BaiL13,BaiL14}. 
The authors of \cite{EdelAS98,AbsiBGS05,AbsiBG06,BakeAG06} approached the Rayleigh quotient or trace minimization problem by using Riemannian optimization on an appropriately chosen matrix manifold \cite{AbsiMS08} such as the Stiefel manifold and the Grassmann manifold. 
However, only very few works devoted to computing symplectic eigenvalues can be found in the literature. In addition to some constructive proofs, e.g., \cite{SimoCS99,Part13}, 
which lead to numerical methods suitable 
for small to medium-sized problems only, the approaches in \cite{Amod06,Ikra18} are based on the one-to-one correspondence between spd matrices and a~special class of Hamiltonian ones, the so-called positive-definite Hamiltonian (pdH) matrices. 
Specifically, it was proposed in \cite{Ikra18} to compute the symplectic eigenvalues of $M$ by transforming the pdH matrix $J_{2n}M$ into a normal form by using elementary symplectic transformations as described in \cite{Ikra91}. Furthermore,  the symplectic Lanczos method for 
computing several extreme eigenvalues of pdH matrices developed in \cite{Amod03,Amod06} was also based on 
a~similar relation. Perturbation bounds for Williamson's diagonal form were presented in \cite{IdelGW17}.

To the best of our knowledge, there is no algorithmic work that relates the computation of symplectic eigenvalues to the  optimization framework similar to that for the standard EVP. In \cite{Hiro06,BhatJ15}, a~connection 
between the sum of the $k$ smallest symplectic eigenvalues of an~spd matrix and the minimal trace of a matrix function defined on the set of symplectic matrices was established. Note that computation was not the focus and no algorithms were discussed in these works. Moreover, no practical procedure can be directly implied from the relation.

In this paper, building on results of~\cite{Hiro06,BhatJ15} and on various additional properties of the trace minimization problem, 
we construct an algorithm to compute the smallest symplectic eigenvalues via solving an optimization problem with symplectic constraints by exploiting the Riemannian structure of $\Spkn$ investigated recently in \cite{GSAS20}. Our goal is not merely to find a way to minimize the trace cost function, but also to investigate the intrinsic connection between the symplectic EVP and the trace minimization problem. To this end, our contributions are mainly reflected in the following aspects. (i)~We characterize the set of eigenbasis matrices in Williamson's diagonal form of an spd matrix (Theorem~\ref{prop:trackingS}) as well as the sets of critical points (Theorem~\ref{prop:Critical_Orthosympl} and Corollary~\ref{corol:CriticalClasses}), saddle points (Proposition~\ref{prop:SaddlePoint}) and the minimizers (Theorem~\ref{prop:minimizers_Orthosympl} and Corollary~\ref{corol:MimimizersClasses}) of the associated trace minimization problem and prove the non-existence of non-global local minimizers (Proposition~\ref{prop:localnonglobal}). 
Some of these findings turn out to be important extensions of the existing results for the standard EVP. (ii) Based on a~recent development on symplectic optimization derived in \cite{GSAS20}, we propose an algorithm (Algorithm~\ref{alg:SymplEigProb}) to solve the symplectic EVP via Riemannian optimization. (iii) As an application, we consider computing the standard eigenvalues and the corresponding eigenvectors of the associated pdH matrix. Numerical examples are reported to verify the effectiveness of the proposed algorithm. 

To avoid ambiguity, we would like to mention that the term ``symplectic eigenvalue problem" or ``symplectic eigenproblem" was also used in some works, e.g., \cite{BunsM86,BennF98,Fass02}, 
in a~different meaning. There, symplectic matrices are used as a tool to compute standard eigenvalues of structured matrices such as Hamiltonian, skew-Hamiltonian, and symplectic matrices. The motivation behind this is that symplectic similarity transformations  preserve these special structures. The resulting structure-preserving methods are, therefore, referred to as symplectic methods. Here, we focus instead on the computation of the symplectic eigenvalues of spd matrices, where symplectic matrices are involved due to Williamson's diagonal form \eqref{eq:sympleigdecomp}, and a~special Hamiltonian EVP is considered as an~application only.

The rest of the paper is organized as follows. In section~\ref{sec:NotationPreliminary}, we 
introduce the notation and review some basic facts for structured matrices.  
In section~\ref{sec:WilliasonTheo}, we define the symplectic EVP, 
revisit Williamson's theorem on  diagonalization of spd matrices, and 
characterize the set of symplectically diagonalizing matrices.
We also establish a~relation between the standard and symplectic eigenvalues for spd and skew-Hamiltonian matrices. 
In section~\ref{sec:SymplTraceMin}, we go deeply into the symplectic trace minimization problem and 
study the connection between the symplectic EVP and trace minimization. 
In section~\ref{sec:EigenCompviaRiemOpt}, we present a~Riemannian optimization algorithm for computing the smallest symplectic eigenvalues as well as the corresponding eigenvectors. 
Additionally, we discuss the computation of standard eigenvalues of pdH matrices.
Some numerical results are given in section~\ref{Sec:NumerExam}. Finally, the conclusion is provided in section~\ref{Sec:Concl}.

\section{Notation and preliminaries}\label{sec:NotationPreliminary}

In this section, after stating some conventions for notation, we introduce several structured matrices used in this paper and collect their useful properties. 

In the Euclidean space $\R^{2n}$, $e_i$ denotes the $i$-th canonical basis vector for 
\linebreak\mbox{$i=1,\ldots,2n$}. 
The Euclidean inner product of two matrices $X, Y\in \R^{n\times m}$ is denoted by $\langle X,Y\rangle:=\tr(X^TY)$, where $\tr(\cdot)$ is the trace operator and $X^T$ stands for the transpose of $X$. 
Given $A\in \R^{m\times m}$, 
$\sym(A):=\frac{1}{2}(A+A^T)$ denotes the symmetric part of~$A$. 
We let $\diag(a_1,\ldots,a_m)\in\R^{m\times m}$ denote the diagonal matrix with the components $a_1,\ldots,a_m$ on the diagonal. 
This notation is also used for block diagonal matrices, where each $a_i$ is a submatrix block. We use $\sp(A)$ to express the subspace spanned by the columns of $A$. 
Furthermore, $\symset(n)$, $\SPD(n)$, and  $\skewset(n)$ denote the sets of all symmetric, symmetric positive-definite, and skew-symmetric $n\times n$ mat\-rices, respectively.
For a~twice continuously differentiable function $f:\R^{n\times m} \to \R$, we denote by $\nabla f(X)$ and $\nabla^2 f(X)$, respectively, the Euclidean gradient and the Hessian of $f$ at~$X$. Moreover, $\mathrm{D}h(X)$ stands for the Fr\'{e}chet derivative at $X$ of a mapping $h$ between Banach spaces, if it exists.

A~matrix $H\in\R^{2n\times 2n}$ is called \emph{Hamiltonian} if $(J_{2n}^TH)^T=J_{2n}^TH$.
It is well-known, e.g., \cite{VanL84}, that the eigenvalues of such a~matrix appear in pairs $(\lambda,-\lambda)$, if $\lambda\in\mathbb{R}\cup {\rm i}\mathbb{R}$, or in quadruples $(\lambda, -\lambda,\overline{\lambda},-\overline{\lambda})$, if $\lambda\in\mathbb{C}\setminus\{\mathbb{R}\cup {\rm i}\mathbb{R}\}$.
Here, ${\rm i}=\sqrt{-1}$ denotes the imaginary unit. Further, a Hamiltonian matrix $H \in\R^{2n\times 2n}$ 
is called \emph{positive-definite Hamiltonian} (pdH) if its symmetric generator $J_{2n}^TH$ is positive definite. The eigenvalues of the pdH matrix are purely imaginary \cite{AmodIT05}.

A~matrix $N\in\mathbb{R}^{2n\times 2n}$ is called \emph{skew-Hamiltonian} if $(J_{2n}^TN)^T=-J_{2n}^TN$. Each eigenvalue of $N$ has even algebraic multiplicity. Skew-Hamiltonian matrices play an~important role in the computation of eigenvalues and invariant subspaces of Hamiltonian matrices, see \cite{BennKM05} for a~survey.

A~matrix $K\in\mathbb{R}^{2n\times 2n}$ is called \emph{orthosymplectic}, if it is both orthogonal and symplectic, i.e., $K^TK=I_{2n}$ and $K^TJ_{2n}K=J_{2n}$. We denote the set of $2n\times 2n$ orthosymplectic matrices by $\OrS(2n)$. It is well-known that similarity transformations of Hamiltonian, skew-Hamiltonian and symplectic
matrices with (ortho)symplectic matrices preserve the corresponding matrix structure. This property is often used in structure-preserving algorithms for solving structured EVPs, e.g., \cite{VanL84,Fass02,BennKM05}.

Next, we present some useful facts on symplectic and orthosymplectic matrices which will be exploited later. 

\begin{proposition}\label{prop:SymplGroup}
	\begin{itemize}
		\item[i)] Let $S \in\Spn$. Then $ S^{-1}, S^T \in \Spn$.
		\item[ii)] The set of orthosymplectic matrices $\OrS(2n)$ is a group characterized by 
		\begin{equation*}
		\OrS(2n) = \left\{K = \begin{bmatrix}
		K_1&K_2\\-K_2&K_1
		\end{bmatrix}\;:\; K_1^TK_2 = K_2^TK_1,\ K_1^TK_1 + K_2^TK_2 = I \right\}.
		\end{equation*}
		\item[iii)] For $S, T \in \Spkn$, $\mathrm{span}(S)=\mathrm{span}(T)$ if and only if there exists 
		a~matrix $K \in \Spk$ such that $T = SK$.
	\end{itemize}
\end{proposition}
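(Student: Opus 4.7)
The plan is to address the three items in turn, using only the defining identities for symplectic and orthogonal matrices together with simple algebraic manipulations.

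For~(i), I first note that $S$ is invertible: since $J_{2n}$ is invertible and $S^T J_{2n} S = J_{2n}$, the square matrix $S$ must have nonzero determinant. Premultiplying $S^T J_{2n} S = J_{2n}$ by $S^{-T}$ and postmultiplying by $S^{-1}$ instantly yields $(S^{-1})^T J_{2n} S^{-1} = J_{2n}$, i.e., $S^{-1}\in\Spn$. For $S^T\in\Spn$, the slickest route is to solve $S^T J_{2n} S = J_{2n}$ for $S^T$, obtaining $S^T = J_{2n} S^{-1} J_{2n}^{-1}$, and plug this into $S J_{2n} S^T$; the identities $J_{2n}^{-1} = -J_{2n}$ and $J_{2n}^2 = -I_{2n}$ collapse the result to $J_{2n}$.

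For~(ii), the key observation is that an orthosymplectic $K$ satisfies $K^T = K^{-1}$, while rearranging symplecticity gives $K^{-1} = J_{2n}^{-1} K^T J_{2n}$; hence $K$ commutes with $J_{2n}$. Writing $K$ as a $2\times 2$ block matrix with $n\times n$ blocks and matching the two sides of $K J_{2n} = J_{2n} K$ entrywise forces exactly $K = \begin{bmatrix} K_1 & K_2 \\ -K_2 & K_1 \end{bmatrix}$. Imposing $K^T K = I_{2n}$ on this form then yields the two stated conditions on $K_1$ and $K_2$. Finally, $\OrS(2n)$ being a group follows from the fact that both $\Spn$ and the orthogonal group are closed under multiplication and inversion; for $\Spn$, closure under products is a one-line expansion of $(ST)^T J_{2n} (ST)$, and closure under inversion is part~(i).

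For~(iii), the reverse direction is trivial since any $K\in\Spk$ is invertible. For the forward direction, $S\in\Spkn$ has full column rank $2k$ because $S^T J_{2n} S = J_{2k}$ is invertible; hence $\sp(S) = \sp(T)$ yields a unique $K\in\R^{2k\times 2k}$ with $T = SK$, and $K$ must be invertible since $T$ also has rank $2k$. Substituting into $T^T J_{2n} T = J_{2k}$ gives $K^T (S^T J_{2n} S) K = J_{2k}$, i.e., $K^T J_{2k} K = J_{2k}$, so $K\in\Spk$.

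No step is genuinely hard; the only computation-heavy step is the block matching in~(ii), but once commutation with $J_{2n}$ is in hand, the block form drops out in one line. The main care needed throughout is bookkeeping of transposes and of the identities $J_{2n}^T = -J_{2n}$ and $J_{2n}^{-1} = -J_{2n}$.
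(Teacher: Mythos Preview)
Your proof is correct. For parts (i) and (ii) you give self-contained algebraic arguments, whereas the paper simply defers to standard references (e.g., \cite{HofeZ11,deGo06,GoluV13}); your route through the commutation $KJ_{2n}=J_{2n}K$ is in fact the standard one behind those references, so the difference is one of exposition rather than substance. One small point on (ii): as written, your argument only establishes the inclusion $\OrS(2n)\subseteq\{\text{matrices of the stated form}\}$. For the characterization to be complete you should remark that the steps reverse---any $K$ of the stated block form commutes with $J_{2n}$, and the two conditions on $K_1,K_2$ are precisely equivalent to $K^TK=I_{2n}$; orthogonality together with commutation then gives $K^TJ_{2n}K=K^{-1}J_{2n}K=J_{2n}$, so $K\in\OrS(2n)$. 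For part (iii), your argument is essentially identical to the paper's.
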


\begin{proof}
	i) These facts have been proved
	in various sources, e.g., \cite[Section~2]{Hiro06} or \cite[Proposition~2 in Chapter~1]{HofeZ11}. 
	
	ii) The representation for elements of $\OrS(2n)$ has been proved in \cite[Section~2.1.2]{deGo06} or \cite[Section~7.8.1]{GoluV13}. 
	This set is a group because it is 
	 the intersection of two groups with the same operation and identity element.
	
	iii) 
	 If $k=n$, the proof is straightforward since $\Spn$ is a group. Otherwise, the sufficiency 
	 	immediately follows from the relation $T = SK$. To prove the necessity, we assume that $S, T \in \Spkn$
		with $\mathrm{span}(S)=\mathrm{span}(T)$. Then there exists a~nonsingular matrix $K \in \mathbb{R}^{2k \times 2k}$ such that $T = SK$. 
	The simplecticity of $K$ is verified by
	$K^TJ_{2k}K = K^TS^TJ_{2n}SK = T^TJ_{2n}T=J_{2k}$.
\end{proof}

\section{Williamson's theorem revisited} \label{sec:WilliasonTheo} 
In this section, we discuss Williamson's theorem and related issues in detail. This includes a definition of symplectic eigenvectors, a~characterization of symplectically diagonalizing matrices, and 
the methods for computing Wil\-liam\-son's diagonal form for general spd matrices and for spd and skew-Hamiltonian matrices.

\subsection{Williamson's diagonal form and symplectic eigenvectors}
\label{subsec:SymplEig}

First, we review some facts related to Williamson's theorem. Let a~matrix $M\in\SPD(2n)$ be transformed into Williamson's diagonal form \eqref{eq:sympleigdecomp} 
with a symplectic transformation matrix $S=[s_1,\ldots,s_n,s_{n+1},\ldots,s_{2n}]$ and a~diagonal matrix $D=\mbox{diag}(d_1,\ldots,d_n)$ with the symplectic eigenvalues on the diagonal 
in the non-decreasing order, i.e., \mbox{$0<d_1\leq \ldots\leq d_n$}.
In this case, we will say that $S$ \emph{symplectically diagonalizes} $M$ or that $S$ is a \emph{symplectically diagonalizing} matrix, when $M$ is clear from the context. 
Note that the set of symplectic eigenvalues,  
also called the \emph{symplectic spectrum}  of~$M$, is known to be unique \cite[Theorem~8.11]{deGo06}, while the symplectically diagonalizing matrix $S$ is not unique. It has been shown in 
\cite[Proposition~8.12]{deGo06} that if 
$S$ and $T$ symplectically diagonalize $M$, then $S^{-1}T\in\OrS(2n)$. 

The \emph{multiplicity} of the symplectic eigenvalue $d_j$, $j=1,\ldots,n$, is the number of times it is repeated in $D$.
Note that this definition differs from that for standard eigenvalues, where the appearance of the eigenvalue in $\diag(D,D)$ is counted. The reasons for this discrepancy will get cleared after introducing symplectic eigenvectors, see, e.g., \cite{BhatJ15,JainM20} and the references therein.

A pair of vectors $(u,v)$ in $\R^{2n}$ is called (\emph{symplectically}) \emph{normalized} if $\langle u,J_{2n}v\rangle = 1$. Two pairs of vectors $(u_1, v_1)$ and $(u_2, v_2)$ are said to be \emph{symplectically orthogonal} if
$$
\langle u_i,J_{2n}v_j\rangle = \langle u_i,J_{2n}u_j\rangle = \langle v_i,J_{2n}v_j\rangle =0
\enskip \mbox{ for } i\not= j,\; i, j = 1,2.
$$
A matrix $X=[u_1, \ldots, u_k, v_1, \ldots, v_k] \in\R^{2n\times 2k}$ is said to be \emph{normalized} if each pair $(u_i, v_i)$, $i=1,\ldots, k,$ is normalized. 
It is called \emph{symplectically orthogonal}  
if the pairs of vectors $(u_i, v_i)$ are mutually symplectically orthogonal.  
Note that the symplecticity of $X$ is equivalent to the fact that $X$ is normalized and symplectically orthogonal.
For $k=n$, a~normalized and symplectically orthogonal vector set forms a~\emph{symplectic basis} in $\mathbb{R}^{2n}$. 

The two columns of a~matrix $X \in \mathbb{R}^{2n\times 2}$ are called a 
{\em symplectic eigenvector pair} of $M\in \SPD(2n)$ associated with a~symplectic eigenvalue $\lambda$ if it holds
\begin{equation}\label{eq:eigenvector_pair}
MX = J_{2n}X\begin{bmatrix}
0 & -\lambda \\ \lambda & \enskip 0
\end{bmatrix}.
\end{equation}
If $X$ is additionally symplectic, we call its columns a~\emph{normalized symplectic eigenvector pair}. Since each symplectic eigenvalue always needs a~pair of symplectic eigenvectors to define, this explains the above definition of the multiplicity. 

More general, the columns of 
$X \in  \mathbb{R}^{2n\times 2k}$ are called a {\em symplectic eigenvector set} of $M\in \SPD(2n)$ associated with the symplectic eigenvalues $\lambda_1,\ldots,\lambda_k$, if it holds
\begin{equation}\label{eq:SymplEigvectorSet_2}
MX = J_{2n}X\begin{bmatrix}
0&-\Lambda \\ \Lambda& \enskip 0
\end{bmatrix}
\end{equation}
with $\Lambda =\mathrm{diag}(\lambda_1,\ldots,\lambda_k)$. If $X$ is, in addition, symplectic, we say that its columns form a~\emph{normalized symplectic eigenvector set.} 

\begin{remark}
If $X\in \Spkn$ satisfies \eqref{eq:SymplEigvectorSet_2}, then due to the uniqueness 
of the symplectic eigenvalues (conventionally arranged in non-decreasing order), there always exists 
a~strictly increasingly ordered index set $\calI_k=\{i_1, \ldots,i_k\}\subset\{1,\ldots,n\}$ such that \mbox{$\Lambda=\diag(d_{i_1},\ldots,d_{i_k})$}. Therefore, in this paper, we will use $X_{\calI_k}$ to denote any normalized symplectic eigenvector set associated with the symplectic eigenvalues $d_{i_1},\ldots,d_{i_k}$. If $\calI_k=\{1,\ldots, k\}$, we will write $X_{1:k}$.
\end{remark} 

Multiplying both sides of Williamson's diagonal form \eqref{eq:sympleigdecomp} from the left with $S^{-T} = J_{2n}^{}SJ_{2n}^T$, we obtain
\begin{equation}\label{eq:sympleigdef}
MS = J_{2n}^{}SJ_{2n}^T\begin{bmatrix}
D & 0 \\ 0 & D
\end{bmatrix} = J_{2n}S\begin{bmatrix}
0 & -D \\ D & \enskip 0
\end{bmatrix}.
\end{equation}
This implies that for any ordered index set 
$\calI_k=\{i_1, \ldots,i_k\}\subset\{1,\ldots,n\}$,
the columns of the symplectic submatrix $[s_{i_1},\ldots,s_{i_k},s_{n+i_1},\ldots,s_{n+i_k}]$ of $S$ form a~normalized symplectic eigenvector set of $M$ associated with $d_{i_1},\ldots,d_{i_k}$.
Note that $[cs_i, \  cs_{n+i}]$ with $c\not\in \{-1,0,1\}$ is a~symplectic eigenvector pair associated with $d_i$ but not normalized. 

Taking into account \eqref{eq:sympleigdef}, Williamson's theorem can alternatively be restated as follows: For any $M\in\SPD(2n)$, there exists a~normalized symplectic eigenvector set of $M$ that constitutes a symplectic basis in~$\mathbb{R}^{2n}$.

Next, we collect some useful facts on symplectic eigenvectors.
\begin{proposition}\label{prop:Jain2020}\textup{\cite[Corollaries~2.4 and~5.3]{JainM20}}
	Let $M\in \SPD(2n)$. 
		\begin{itemize}
		\item[i)] Any two symplectic eigenvector pairs corresponding
		to two distinct symplectic eigenvalues of $M$ are symplectically orthogonal. 
		\item[ii)] Let $\lambda$ be a symplectic eigenvalue of $M$ of multiplicity $m$ and let the columns of $X\in \mathbb{R}^{2n\times 2m}$ be a normalized symplectic eigenvector set associated with $\lambda$. Then the columns of a~matrix $Y\in \mathbb{R}^{2n\times 2m}$ form also a~normalized symplectic eigenvector set associated with $\lambda$ if and only if there exists $K\in \OrS(2m)$ such that $Y = XK$.
	\end{itemize}
\end{proposition}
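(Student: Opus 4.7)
My plan for part (i) is to turn symplectic orthogonality into a small commutation problem. Writing the defining relation \eqref{eq:eigenvector_pair} as $MX_\ell = J_{2n} X_\ell L_\ell$ with $L_\ell = \begin{bmatrix} 0 & -\lambda_\ell \\ \lambda_\ell & 0 \end{bmatrix}$ for $\ell = 1,2$, I would left-multiply the $\ell = 1$ instance by $X_2^T$ and use the symmetry of $M$ together with $J_{2n}^T = -J_{2n}$ and $L_\ell^T = -L_\ell$ to derive the intertwining identity $C L_1 = L_2 C$ for the $2 \times 2$ matrix $C := X_2^T J_{2n} X_1$. Reading this equation entry by entry yields a tiny linear system in the four entries of $C$ whose coefficients, combined with $\lambda_1,\lambda_2>0$ and $\lambda_1 \neq \lambda_2$ (so $\lambda_1^2 \neq \lambda_2^2$), force $C = 0$, which is exactly the symplectic orthogonality of the two pairs.

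The easy direction of part (ii) rests on the observation that when all $m$ symplectic eigenvalues coincide with $\lambda$, the block in \eqref{eq:SymplEigvectorSet_2} collapses to $L = -\lambda J_{2m}$. The explicit block form of orthosymplectic matrices from Proposition~\ref{prop:SymplGroup}(ii) makes it immediate that every $K \in \OrS(2m)$ commutes with $J_{2m}$ and hence with $L$. Consequently, if $Y = XK$ with $K \in \OrS(2m)$, then $MY = MXK = J_{2n} X L K = J_{2n} X K L = J_{2n} Y L$, and $Y^T J_{2n} Y = K^T J_{2m} K = J_{2m}$ by the symplecticity of $X$ and $K$. Thus $Y$ is a normalized symplectic eigenvector set associated with $\lambda$.

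The converse in (ii) is the main obstacle. I would recast the symplectic eigenvalue condition via the Hamiltonian matrix $H := J_{2n}^T M$: conjugating $H$ by a Williamson diagonalizer $S$ of~\eqref{eq:sympleigdecomp} shows that $H$ is similar to $\begin{bmatrix} 0 & -D \\ D & 0 \end{bmatrix}$, hence is diagonalizable over $\mathbb{C}$ with purely imaginary eigenvalues $\pm \mathrm{i}\, d_1,\ldots,\pm \mathrm{i}\, d_n$ whose multiplicities match those of the symplectic spectrum of $M$. In particular, the real $H$-invariant subspace associated with the pair $\pm \mathrm{i}\lambda$ has dimension exactly $2m$. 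Since both $X$ and $Y$ satisfy $HX = XL$ and $HY = YL$ with $L$ carrying spectrum $\{\pm \mathrm{i}\lambda\}$, and because symplectic matrices have full column rank, their columns each span precisely this $2m$-dimensional subspace; Proposition~\ref{prop:SymplGroup}(iii) then produces $K \in \mathrm{Sp}(2m)$ with $Y = XK$. Substituting $Y=XK$ into $MY = J_{2n} Y L$, invoking $MX = J_{2n} X L$ and the full rank of $J_{2n} X$, yields $KL = LK$, i.e., $K J_{2m} = J_{2m} K$ (since $L = -\lambda J_{2m}$ with $\lambda \neq 0$). Combining this commutation with $K^T J_{2m} K = J_{2m}$ gives $K^T K = I_{2m}$, so $K \in \OrS(2m)$. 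The delicate step is the dimension count identifying the Williamson multiplicity $m$ of $\lambda$ with the algebraic multiplicity of $\pm \mathrm{i}\lambda$ as eigenvalues of $H$; once it is settled, the remainder is essentially linear-algebraic bookkeeping built on Proposition~\ref{prop:SymplGroup}.
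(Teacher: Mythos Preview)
The paper does not prove this proposition; it is quoted verbatim from \cite[Corollaries~2.4 and~5.3]{JainM20} without argument, so there is no in-paper proof to compare against.

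Your argument is correct. Part~(i) is the natural intertwining computation: from $MX_\ell=J_{2n}X_\ell L_\ell$ and the symmetry of $M$ you correctly obtain $L_2C=CL_1$ for $C=X_2^TJ_{2n}X_1$, and the $2\times 2$ linear system indeed forces $C=0$ once $\lambda_1^2\neq\lambda_2^2$. In part~(ii), the easy direction is handled cleanly via the commutation $KJ_{2m}=J_{2m}K$ for $K\in\OrS(2m)$. For the converse, your route through the Hamiltonian matrix $H=J_{2n}^TM$ is sound: the similarity $S^{-1}HS=\begin{bmatrix}0&-D\\D&0\end{bmatrix}$ (which is exactly \eqref{eq:sympleigdef} rewritten) shows $H$ is diagonalizable with the eigenvalue $\mathrm{i}\lambda$ of multiplicity $m$, so the real $H$-invariant subspace for $\{\pm\mathrm{i}\lambda\}$ has dimension $2m$; since $HX=XL$ and $HY=YL$ with $\mathrm{spec}(L)=\{\pm\mathrm{i}\lambda\}$ and both $X,Y$ have full column rank $2m$, necessarily $\mathrm{span}(X)=\mathrm{span}(Y)$, whence Proposition~\ref{prop:SymplGroup}(iii) gives $Y=XK$ with $K\in\Spk[2m]$, and the remaining commutation argument upgrading $K$ to $\OrS(2m)$ is correct. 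The ``delicate step'' you flag---matching the Williamson multiplicity with the algebraic multiplicity of $\pm\mathrm{i}\lambda$---is in fact immediate from the explicit similarity above, so no subtlety remains.
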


We conclude this subsection by mentioning a~connection between the symplectic eigenvalues and 
eigenvectors of the spd matrix $M$ and the standard eigenvalues and eigenvectors of the pdH matrix $J_{2n}M$. This result is not new and has already been established in a~slightly different form in \cite[Theorem~8.11]{deGo06} and \cite[Lemma~2.2]{JainM20}.

\begin{proposition}\label{prop:SEVandEV}
	Let $M \in \SPD(2n)$ and let $S = [s_1,\ldots,s_{2n}]$ be a~symplectically diagonalizing matrix of $M$. Then $d_j, j=1,\ldots,n$, are the symplectic eigenvalues of $M$ if and only if $\pm \i d_j, j=1,\ldots,n,$ are the standard eigenvalues of the pdH matrix $H=J_{2n}M$. Moreover,  for any $j=1,\ldots,n$, $s_j \pm \i s_{n+j}$ is an eigenvector of $H$ corresponding to the eigenvalue $\pm\i d_j$.
\end{proposition}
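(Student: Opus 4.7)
The plan is to reduce the statement to an explicit diagonalization of the block matrix that appears in Williamson's form. Starting from equation~\eqref{eq:sympleigdef}, namely
\begin{equation*}
MS = J_{2n}S\begin{bmatrix} 0 & -D \\ D & \enskip 0 \end{bmatrix},
\end{equation*}
I would multiply both sides on the left by $J_{2n}$ and use $J_{2n}^2=-I_{2n}$ to obtain
\begin{equation*}
HS = J_{2n}MS = S\begin{bmatrix} 0 & \enskip D \\ -D & \enskip 0 \end{bmatrix}.
\end{equation*}
This is an exact similarity: $S^{-1}HS$ is block anti-diagonal with $D$ and $-D$.

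The second step is to diagonalize that $2n\times 2n$ block matrix over $\mathbb{C}$. A direct computation shows that for each $j=1,\ldots,n$, the vectors $e_j\pm\i e_{n+j}$ are eigenvectors of $\bigl[\begin{smallmatrix}0 & D\\ -D & 0\end{smallmatrix}\bigr]$ with eigenvalues $\pm\i d_j$ respectively, since $\bigl[\begin{smallmatrix}0 & D\\ -D & 0\end{smallmatrix}\bigr](e_j+\i e_{n+j}) = \i d_j e_j - d_j e_{n+j} = \i d_j(e_j+\i e_{n+j})$, and symmetrically for the minus sign. Applying $S$ to both sides of the eigenvalue relation then yields
\begin{equation*}
H(s_j\pm\i s_{n+j}) = \pm\i d_j\,(s_j\pm\i s_{n+j}),
\end{equation*}
which is precisely the ``moreover'' claim. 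Since we have exhibited $2n$ eigenvalues $\{\pm\i d_j:j=1,\ldots,n\}$ of the $2n\times 2n$ matrix $H$, counting multiplicities, this is its complete spectrum.

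Finally, I would close the biconditional by invoking uniqueness. The forward direction is exactly what was just proved. For the converse, given that the standard eigenvalues of $H$ are $\pm\i d_j$, I would use Williamson's theorem to assert the existence of some symplectic spectrum $\{d'_j\}$ of $M$; the forward direction then identifies $\{\pm\i d'_j\}$ with the standard spectrum of $H$, so $\{\pm\i d_j\}=\{\pm\i d'_j\}$ as multisets. Because all the $d_j$ and $d'_j$ are strictly positive (consistent with $H$ being pdH, whose spectrum lies on $\i\mathbb{R}$ with the correct sign pattern), this forces $\{d_j\}=\{d'_j\}$.

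The only nontrivial obstacle is bookkeeping: getting the signs right in the similarity $S^{-1}HS$ and matching them to the $\pm$ convention in $s_j\pm\i s_{n+j}$. Everything else is mechanical, and no new geometric input beyond Williamson's form and the definition of $H=J_{2n}M$ is required.
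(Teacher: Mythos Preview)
Your proof is correct and follows exactly the approach intended by the paper: the paper's own proof is the single sentence ``The result immediately follows from the relation~\eqref{eq:sympleigdef},'' and you have simply unpacked that relation by multiplying through by $J_{2n}$ and reading off the eigenvectors of the resulting block matrix. Your handling of the biconditional via uniqueness of the symplectic spectrum is a harmless elaboration of what the paper leaves implicit.
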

\begin{proof} 
The result immediately follows from the relation \eqref{eq:sympleigdef}.
\end{proof}

This proposition shows that the eigenvalues of a~pdH matrix $H$ are purely ima\-gi\-nary and that they can be determined by computing the symplectic eigenvalues of the corresponding spd matrix $M=J_{2n}^TH$.
\subsection{Characterization of the set of symplectically diagonalizing matrices}

As we mentioned before, the diagonalizing matrix in Williamson's diagonal form \eqref{eq:sympleigdecomp} is not unique. In this subsection, we aim to characterize the set of all symplectically diagonalizing matrices. 

First, note that if $M\in\SPD(2n)$ has only one symplectic eigenvalue of multiplicity $n$, then by Proposition~\ref{prop:Jain2020}{(ii)} such a~set is given by $S\OrS(2n)$,
where $S$ is any~symplectically diagonalizing matrix of $M$. For general case, we present two special classes of symplectically diagonalizing matrices.

\begin{proposition}\label{prop:ClassDiagonalizing}
Let $M\in\SPD(2n)$ and let $S\in\Spn$ symplectically diagonalize $M$. Then the following statements hold.
\begin{enumerate}
\item[i)] Let $R_{(j,\theta)}\in\R^{2n\times 2n}$ be the Givens rotation matrix of angle $\theta$ in the plane spanned by $e_j$ and $e_{n+j}$. Then $SR_{(j,\theta)}$ symplectically diagonalizes $M$ for any $j = 1,\ldots,n$ and $\theta \in [0,2\pi)$. 

\item[ii)] Let $Q=\diag(Q_1,\ldots,Q_q,Q_1,\ldots,Q_q)$, where $Q_j\in\mathbb{R}^{m_j\times m_j}$, $j = 1,\ldots,q$, are orthogonal, 
$m_1,\ldots,m_q$ are multiplicities of the symplectic eigenvalues and $m_1+\ldots+m_q = n$. Then $SQ$ symplectically diagonalizes $M$.
\end{enumerate} 
\end{proposition}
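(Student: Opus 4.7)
The plan is to verify in each case two things: (a) that the modified matrix ($SR_{(j,\theta)}$ or $SQ$) is still symplectic, and (b) that it produces the same Williamson form $\diag(D,D)$ when used in the congruence on $M$. Since $\Spn$ is a group and $S\in\Spn$ by assumption, it suffices in (a) to show that $R_{(j,\theta)}$ and $Q$ are themselves symplectic. For (b), using $S^TMS=\diag(D,D)$, the task reduces to checking that $R_{(j,\theta)}$ and $Q$ commute with $\diag(D,D)$ in the two-sided sense, i.e.\ $R_{(j,\theta)}^T\diag(D,D)R_{(j,\theta)}=\diag(D,D)$ and $Q^T\diag(D,D)Q=\diag(D,D)$.

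For part (i), I would first write $R_{(j,\theta)}$ in the block form $\begin{bmatrix}C & -\Sigma \\ \Sigma & C\end{bmatrix}$, where $C=I_n+(\cos\theta-1)e_je_j^T$ and $\Sigma=\sin\theta\, e_je_j^T$. The blocks $C$ and $\Sigma$ are symmetric and commute, and satisfy $C^TC+\Sigma^T\Sigma=I_n$, so by Proposition~\ref{prop:SymplGroup}(ii) the matrix $R_{(j,\theta)}$ is orthosymplectic, in particular symplectic. For the congruence, observe that $R_{(j,\theta)}$ differs from $I_{2n}$ only inside $\mathrm{span}(e_j,e_{n+j})$, and the restriction of $\diag(D,D)$ to this plane equals $d_j I_2$, a scalar matrix; hence the rotation commutes with this restriction and $R_{(j,\theta)}^T\diag(D,D)R_{(j,\theta)}=\diag(D,D)$.

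For part (ii), set $Q_d:=\diag(Q_1,\ldots,Q_q)\in\R^{n\times n}$, which is orthogonal since each $Q_j$ is. Writing $Q=\diag(Q_d,Q_d)$ and using the block form of $J_{2n}$, a direct block computation gives $Q^TJ_{2n}Q=J_{2n}$ because $Q_d^TQ_d=I_n$, so $Q\in\Spn$ and thus $SQ\in\Spn$. The key structural point is that the block sizes $m_1,\ldots,m_q$ of $Q_d$ are exactly the multiplicities of the distinct symplectic eigenvalues, so that with $\lambda_1<\cdots<\lambda_q$ the distinct symplectic eigenvalues we may write $D=\diag(\lambda_1 I_{m_1},\ldots,\lambda_q I_{m_q})$. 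This alignment makes $D$ commute block-wise with $Q_d$, and a blockwise computation yields $Q_d^TDQ_d=\diag(\lambda_1 Q_1^TQ_1,\ldots,\lambda_q Q_q^TQ_q)=D$, hence $Q^T\diag(D,D)Q=\diag(D,D)$.

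Combining (a) and (b) in each case then gives $(SR_{(j,\theta)})^TM(SR_{(j,\theta)})=\diag(D,D)$ and $(SQ)^TM(SQ)=\diag(D,D)$, completing the proof. The only substantive point (rather than an obstacle) is the second claim of part (ii): the conclusion would fail if the block sizes of $Q$ were not adapted to the multiplicities of the symplectic eigenvalues, so the multiplicity hypothesis is exactly what enables the commutation $Q_d^TDQ_d=D$.
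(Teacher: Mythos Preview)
Your proposal is correct and follows exactly the same approach as the paper: reduce both claims to showing that $R_{(j,\theta)}$ and $Q$ are symplectic and congruently preserve $\diag(D,D)$, then verify these by direct computation. The paper's own proof is a one-line remark that ``this can be verified by direct calculations,'' whereas you have spelled out those calculations carefully (using Proposition~\ref{prop:SymplGroup}(ii) for the orthosymplecticity of $R_{(j,\theta)}$, and the block-multiplicity alignment for $Q_d^T D Q_d = D$); there is no substantive difference in strategy.
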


\begin{proof}
As the product of two symplectic matrices is again symplectic, we have to show that 
$R_{(j,\theta)}$ and $Q$ are symplectic, and that they congruently preserve $\diag(D,D)$, i.e., 
$R_{(j,\theta)}^T\diag(D,D) R_{(j,\theta)} = \diag(D,D) $ and 
$Q^T\diag(D,D) Q = \diag(D,D) $. This can be verified by direct calculations. 
\end{proof}

In the case $n=1$, it follows from \cite[Proposition~8.12]{deGo06} that the set of all symplectically diagonalizing matrices is $S \,\mathbb{SO}(2)$, where $\mathbb{SO}(2)$ is the  orthogonal group of rotations in $\mathbb{R}^2$. In other words, the first class of matrices in Proposition \ref{prop:ClassDiagonalizing} completely cha\-rac\-te\-rizes the set of all symplectically diagonalizing matrices when $n=1$.

For the general case $n>1$, it turns out that Proposition~\ref{prop:Jain2020} plays an important role in establishing the required result. Using the first statement in this proposition, we can show that the symplectic eigenvectors associated with distinct symplectic eigenvalues are linearly independent, see, e.g., \cite[Theorem~1.15]{deGo06}. 
Let $$
A^{(i)} = \begin{bmatrix}
A_1^{(i)}& A_2^{(i)}\\A_3^{(i)}&A_4^{(i)}
\end{bmatrix}
\in \mathbb{R}^{2k_{i}\times 2k_{i}},\qquad i = 1,\ldots,q,
$$
be matrices that have been decomposed into four square blocks. We will denote by
$$
\mathrm{dab}(A^{(1)},\dots,A^{(q)}) = \begin{bmatrix}
A_1& A_2\\A_3&A_4
\end{bmatrix}
$$
the $2(k_1+\cdots+k_q)\times 2(k_1+\cdots+k_q)$ matrix generated by diagonally assembling the blocks
$A^{(i)}_{\ell}$ such that $A_{\ell} = \diag(A^{(1)}_{\ell},\ldots,A^{(q)}_{\ell})$, $\ell= 1,\ldots,4$.  Hence the notation ``dab". 
If each matrix $A^{(i)}$ belongs to a set of matrices $\Phi^{(i)}$, then $\mathrm{dab}(\Phi^{(1)}\times \cdots \times \Phi^{(q)})$ denotes the set of all matrices $\mathrm{dab}(A^{(1)},\dots,A^{(q)})$ 
with $A^{(i)}\in \Phi^{(i)}$, $i = 1,\ldots,q$. It is straightforward to verify the following lemma.
\begin{lemma}\label{lem:tracingS}
For any set of integers $k_1,\ldots,k_q$, it holds that 
$$
\mathrm{dab}\bigl(\OrS(2k_1)\times \cdots \times \OrS(2k_q)\bigr)\subset \OrS\bigl(2(k_1+\cdots+k_q)\bigr).
$$
\end{lemma}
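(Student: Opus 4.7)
The plan is to exploit the explicit block characterization of orthosymplectic matrices given in Proposition~\ref{prop:SymplGroup}(ii). First I would observe that each factor $A^{(i)} \in \OrS(2k_i)$ is forced to have the form $A^{(i)} = \begin{bmatrix} A_1^{(i)} & A_2^{(i)} \\ -A_2^{(i)} & A_1^{(i)} \end{bmatrix}$, so in the notation of the dab operation we automatically read off $A_3^{(i)} = -A_2^{(i)}$ and $A_4^{(i)} = A_1^{(i)}$. Alongside these shape constraints, the two algebraic identities $(A_1^{(i)})^T A_2^{(i)} = (A_2^{(i)})^T A_1^{(i)}$ and $(A_1^{(i)})^T A_1^{(i)} + (A_2^{(i)})^T A_2^{(i)} = I_{k_i}$ come along for free.

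Next I would pass to $M := \mathrm{dab}(A^{(1)},\ldots,A^{(q)})$ and inspect its four blocks. Since $A_\ell = \diag(A_\ell^{(1)},\ldots,A_\ell^{(q)})$ by definition, the shape relations propagate block-wise: $A_3 = -A_2$ and $A_4 = A_1$. Hence $M$ already exhibits the $(K_1, K_2, -K_2, K_1)$ form demanded by Proposition~\ref{prop:SymplGroup}(ii), where $K := k_1 + \cdots + k_q$.

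The last step is to verify the two algebraic identities for the pair $(A_1, A_2)$ of $M$. Because $A_1$ and $A_2$ share the same underlying block-diagonal partition, the product $A_1^T A_2$ is block-diagonal with $i$-th block $(A_1^{(i)})^T A_2^{(i)}$, and analogously for $A_2^T A_1$ and for $A_1^T A_1 + A_2^T A_2$. Thus both $A_1^T A_2 = A_2^T A_1$ and $A_1^T A_1 + A_2^T A_2 = I_K$ reduce immediately to the per-block identities collected in the first step, and Proposition~\ref{prop:SymplGroup}(ii) delivers $M \in \OrS(2K)$.

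The main obstacle, if there is one, is purely bookkeeping: one must track two distinct block partitions at once — the outer $2\times 2$ partition that encodes the orthosymplectic shape, and the inner partition into $q$ pieces produced by the dab construction — and confirm that they interact in the expected way, so that block-diagonality in the inner partition is preserved by all the required products and transposes. Once this compatibility is spelled out, the argument is a direct block-matrix computation with no analytic content, which is presumably why the excerpt advertises the lemma as straightforward.
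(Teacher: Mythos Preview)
Your proposal is correct and is exactly the kind of direct block-matrix verification the paper has in mind when it declares the lemma ``straightforward to verify'' without supplying a proof. The route through Proposition~\ref{prop:SymplGroup}(ii) and the observation that the inner block-diagonal partition is respected by the relevant products and transposes is the natural way to carry out that verification.
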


One can check that the matrices $R_{(j,\theta)}$ and $Q$ in Proposition~\ref{prop:ClassDiagonalizing} are elements of the set $\mathrm{dab}\bigl(\OrS(2k_1)\times \cdots \times \OrS(2k_q)\bigr)$ with appropriately chosen $k_1,\ldots,k_q$. Indeed, for any $j = 1,\ldots,n$, $R_{(j,\theta)} \in \mathrm{dab}\bigl(\OrS(2(j-1))\times \OrS(2)\times \OrS(2(n-j))\bigr)$. Similarly, the matrix $Q$ 
belongs to the set
 $\mathrm{dab}\bigl(\OrS(2m_1)\times \cdots \times \OrS(2m_q)\bigr)$.

We are now ready to state the main result in this subsection. Theorem \ref{prop:trackingS} below is an important improvement of the classical result \cite[Proposition~8.12]{deGo06} in the sense that it characterizes exactly the set of symplectically diagonalizing matrices of \mbox{$M\in \SPD(2n)$}. Moreover, its sufficiency part covers the matrix classes in Proposition~\ref{prop:ClassDiagonalizing} as special cases. Finally, it is also a~nontrivial generalization of Proposition~\ref{prop:Jain2020}{(ii)}.

\begin{theorem}\label{prop:trackingS}
Let $M\in \SPD(2n)$ have $q\leq n$ distinct symplectic eigenva\-lues $d_1,\ldots,d_q$ with multiplicities $m_1,\ldots,m_q$, respectively, and let $S\in \Spn$ be a~symplectically diagonalizing matrix of $M$. Then $T \in \Spn$ symplectically diagonalizes $M$ if and only if there exists 
$K \in \mathrm{dab}\bigl(\OrS(2m_1)\times \cdots \times \OrS(2m_q)\bigr)$ such that $T = SK$. 
\end{theorem}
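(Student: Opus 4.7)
The plan is to prove sufficiency by direct verification, and necessity by combining the classical result $S^{-1}T\in\OrS(2n)$ with the block commutation structure forced by $D$ having only $q$ distinct values.

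For sufficiency, I would start by invoking Lemma~\ref{lem:tracingS} to obtain $K\in\OrS(2n)$, so in particular $T=SK$ is symplectic. It then remains to show $T^TMT=\diag(D,D)$, i.e. that $K$ leaves $\diag(D,D)$ invariant under congruence. Using Proposition~\ref{prop:SymplGroup}(ii) on each block, write $K^{(i)}=\bigl[\begin{smallmatrix}K_1^{(i)}&K_2^{(i)}\\-K_2^{(i)}&K_1^{(i)}\end{smallmatrix}\bigr]$ and assemble them via $\mathrm{dab}$ to get $K=\bigl[\begin{smallmatrix}L_1&L_2\\-L_2&L_1\end{smallmatrix}\bigr]$ where $L_1,L_2$ are block diagonal with block sizes $m_1,\dots,m_q$. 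Because $D=\diag(d_1I_{m_1},\dots,d_qI_{m_q})$, the matrix $D$ commutes with both $L_1$ and $L_2$ block-by-block. Hence $\diag(D,D)$ commutes with $K$, so $K^T\diag(D,D)K=K^TK\diag(D,D)=\diag(D,D)$, using orthogonality of $K$. Therefore $T^TMT=K^TS^TMSK=\diag(D,D)$.

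For necessity, set $K=S^{-1}T$. Since both $S$ and $T$ symplectically diagonalize $M$, the classical result \cite[Proposition~8.12]{deGo06} (already cited in Subsection~\ref{subsec:SymplEig}) gives $K\in\OrS(2n)$. On the other hand, $K^T\diag(D,D)K=\diag(D,D)$, and combining with $K^TK=I_{2n}$ yields $K\diag(D,D)=\diag(D,D)K$, i.e. $K$ commutes with $\diag(D,D)$. Writing $K$ in $n\times n$ blocks and using the fact that the $d_j$'s are \emph{distinct}, each $n\times n$ block of $K$ must itself be block diagonal with the partition $(m_1,\dots,m_q)$. Next, I would exploit the orthosymplectic normal form from Proposition~\ref{prop:SymplGroup}(ii), which forces $K=\bigl[\begin{smallmatrix}L_1&L_2\\-L_2&L_1\end{smallmatrix}\bigr]$ with $L_1^TL_1+L_2^TL_2=I$ and $L_1^TL_2=L_2^TL_1$; combined with the block diagonality of $L_1$ and $L_2$ derived above, these identities descend blockwise to give, for each $i$, an orthosymplectic matrix $A^{(i)}=\bigl[\begin{smallmatrix}L_1^{(i)}&L_2^{(i)}\\-L_2^{(i)}&L_1^{(i)}\end{smallmatrix}\bigr]\in\OrS(2m_i)$. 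By construction, $K=\mathrm{dab}(A^{(1)},\dots,A^{(q)})$, completing the proof.

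The main obstacle I anticipate is purely bookkeeping: one must simultaneously track two different block partitions of $K$, namely the coarse $(n,n)$ partition coming from the symplectic form $J_{2n}$ and the finer $(m_1,\dots,m_q)$ partition coming from the multiplicity structure of $D$. Getting these to interact correctly—so that the commutation with $\diag(D,D)$ produces block diagonality in the fine partition, while the orthosymplectic form of Proposition~\ref{prop:SymplGroup}(ii) produces the required structure in the coarse partition—requires choosing a consistent indexing convention and then verifying that the identities defining $\OrS$ survive the restriction to each diagonal sub-block. Once this is set up carefully, both directions reduce to the commutativity of $D$ with block diagonal matrices whose blocks have sizes $m_1,\ldots,m_q$.
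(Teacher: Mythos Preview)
Your sufficiency argument is essentially identical to the paper's.

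For necessity, your route is correct but genuinely different from the paper's. The paper does not use \cite[Proposition~8.12]{deGo06} at all here; instead it works eigenvalue-by-eigenvalue, observing that for each $d_i$ of multiplicity $m_i$ the corresponding column blocks $S_{\mathcal{I}_{m_i}}$ and $T_{\mathcal{I}_{m_i}}$ are both normalized symplectic eigenvector sets for $d_i$, and then invokes Proposition~\ref{prop:Jain2020}(ii) to produce $K^{(i)}\in\OrS(2m_i)$ with $T_{\mathcal{I}_{m_i}}=S_{\mathcal{I}_{m_i}}K^{(i)}$; reassembling these gives $K=\mathrm{dab}(K^{(1)},\dots,K^{(q)})$ directly. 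Your argument instead first obtains $K\in\OrS(2n)$ globally via de~Gosson, then derives the block structure from the commutation $KD=DK$ on each $n\times n$ block, and finally uses the characterization of $\OrS(2n)$ in Proposition~\ref{prop:SymplGroup}(ii) to see that the block-diagonal pieces themselves satisfy the orthosymplectic relations. The paper's approach is shorter and more conceptual once Proposition~\ref{prop:Jain2020}(ii) is available, and it avoids the double-partition bookkeeping you flag as the main obstacle; your approach is more self-contained in that it does not appeal to the Jain--Mishra result and reduces everything to a commutation-with-a-diagonal argument plus the explicit form of $\OrS(2n)$.
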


\begin{proof}
First, we show the sufficiency. 
Lemma~\ref{lem:tracingS} implies that $K \in \OrS(2n)$. Then we obtain 
$$
T^TMT = K^TS^TMSK = K^T\diag(D,D)K = K^TK\diag(D,D) = \diag(D,D),
$$
where the third equality follows from the fact that 
$K\in \mathrm{dab}\bigl(\OrS(2m_1)\times \cdots \times \OrS(2m_q)\bigr)$. This means that $T$ symplectically diagonalizes $M$. 

Conversely, let 
$T$ symplectically diagonalize $M$. Let us pick any symplectic eigenvalue $d_i$ of multiplicity $m_i$, $i = 1,\ldots,q$, and let
${\mathcal{I}}_{m_i}=\left\{ j_i + 1,\ldots, j_i+m_i\right\}$
with $j_i=m_1+\cdots+m_{i-1}$.
Then the columns of $S_{\mathcal{I}_{m_i}},T_{\mathcal{I}_{m_i}}\in\mathbb{R}^{2n\times 2m_i}$ form the normalized symplectic eigenvector sets associated with $d_i$. Therefore, by Proposition~\ref{prop:Jain2020}{(ii)} there exists $K^{(i)}\in\OrS(2m_i)$ such that $T_{\mathcal{I}_{m_i}}=S_{\mathcal{I}_{m_i}}K^{(i)}$.
Ordering the columns of $T_{\mathcal{I}_{m_i}}$ and $S_{\mathcal{I}_{m_i}}$ for $i=1,\ldots,q$ as in $T$ and $S$, respectively, we obtain $T=SK$ with $K=\mathrm{dab}\bigl(K^{(1)},\ldots,K^{(q)}\bigr)\in
\mathrm{dab}\bigl(\OrS(2m_1)\times \cdots \times \OrS(2m_q)\bigr)$.
\end{proof}

\subsection{Computation of Williamson's diagonal form}
Here, we present an~algorithm based on \cite{Part13} for computing a symplectically diagonalizing matrix $S$ of $M$ in \eqref{eq:sympleigdecomp}. This procedure can also be viewed as 
a~constructive proof of Williamson's theorem. Since $M$ is spd, its real symmetric square root $M^{1/2}$ exists. It is easy to check that $\tilde{M}=M^{1/2}J_{2n}M^{1/2}$ is skew-symmetric and nonsingular.  This matrix can be transformed into the real Schur form 
\begin{equation}\label{eq:Schur}
Q^T\tilde{M}Q = \diag\left(\begin{bmatrix}
0&d_1\\-d_1&0
\end{bmatrix},\ldots, \begin{bmatrix}
0&d_n\\-d_n&0
\end{bmatrix}\right),
\end{equation}
where $Q$ is orthogonal, and $0<d_1\leq\ldots\leq d_n$, 
see \cite[Theorem~7.4.1]{GoluV13}. Further, let
\begin{equation}
P = [e_1,e_3,\ldots,e_{2n-1},e_2,e_4,\ldots,e_{2n}]
\label{eq:matrP}
\end{equation}
denote the perfect shuffle permutation matrix. Obviously, $QP$ is orthogonal and it holds 
$$
P^TQ^T\tilde{M}QP = \begin{bmatrix}
0&D\\-D& 0
\end{bmatrix},
$$
where $D = \diag(d_1,\ldots,d_n)$. Finally, we set
\begin{equation}\label{eq:DiagonalizingMat}
S = J_{2n}M^{1/2} QP\begin{bmatrix}
0&-D^{-1/2}\\D^{-1/2}& 0
\end{bmatrix}.
\end{equation}
It can be verified that $S$ is symplectic and $S^TMS = \diag(D,D)$.  
For ease of reference, we summarize these steps in Algorithm~\ref{alg:WilliamsonDiag}.

\begin{algorithm}
	\caption{Williamson's diagonal form}
	\label{alg:WilliamsonDiag}
	\begin{algorithmic}[1]
		\REQUIRE $M\in\SPD(2n)$.
		\ENSURE $S\in\Spn$, $D=\diag (d_1,\ldots,d_n)$ such that  $S^TMS = \diag(D,D)$.
		\STATE {Compute the symmetric square root $M^{1/2}$ of $M$.}
		\STATE Compute the real Schur form \eqref{eq:Schur} of $\tilde{M} = M^{1/2}J_{2n}M^{1/2}$. 
		\STATE Set $D=\mbox{diag}(d_1,\ldots,d_n)$ and compute the symplectic matrix $S$ as in \eqref{eq:DiagonalizingMat} with $P$ given in \eqref{eq:matrP}. 
	\end{algorithmic}
\end{algorithm}

Note that $M^{1/2}$ can be computed using the spectral decomposition of $M$, see \cite[Section~6.2]{High08}. For the computation of the real Schur form \eqref{eq:Schur}, we can employ the skew-symmetric QR algorithm \cite{WardGray78}. In this case, Algorithm~\ref{alg:WilliamsonDiag} requires about
$125n^3$ flops.

\subsection{Williamson's diagonal form for skew-Hamiltonian matrices}
To close this section, we present an~alternative algorithm for computing Williamson's diagonal form of spd matrices which are additionally assumed to be skew-Hamiltonian. This algorithm and Proposition~\ref{prop:Diagonalizing_Orthosympl} below will be of crucial importance and employed as a step, which is faster than Algorithm~\ref{alg:WilliamsonDiag} designed for general spd matrices, in our optimization method for computing the symplectic eigenvalues and eigenvectors
of general spd matrices presented in Section~\ref{sec:EigenCompviaRiemOpt}.

\begin{proposition}\label{prop:Diagonalizing_Orthosympl}
	Let $N\in\mathbb{R}^{2n\times 2n}$ be spd and skew-Hamiltonian.
	If $S$ symplectically diagonalizes $N$, then $S\in \OrS(2n)$.
\end{proposition}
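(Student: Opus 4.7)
My plan is to show directly that $S^T S = I_{2n}$, after which symplecticity of $S$ immediately gives $S \in \OrS(2n)$.

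First, I would observe that the skew-Hamiltonian property $(J_{2n}^T N)^T = -J_{2n}^T N$ combined with $N^T = N$ (as $N$ is spd) yields $N J_{2n} = J_{2n} N$; that is, $N$ commutes with $J_{2n}$. This commutativity is the crucial structural ingredient that distinguishes the skew-Hamiltonian case from general spd matrices.

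The key algebraic step is to apply the conjugation $J_{2n}(\cdot) J_{2n}^{-1}$ to both sides of $S^T N S = \diag(D,D)$. On the right-hand side, $\diag(D,D)$ commutes with $J_{2n}$, so $J_{2n} \diag(D,D) J_{2n}^{-1} = \diag(D,D)$. On the left-hand side, the symplectic identities $J_{2n} S^T = S^{-1} J_{2n}$ and $S J_{2n}^{-1} = J_{2n}^{-1} S^{-T}$, both easy consequences of $S^T J_{2n} S = J_{2n}$, together with $N J_{2n} = J_{2n} N$, turn the left-hand side into $S^{-1} N S^{-T}$. Thus $S^{-1} N S^{-T} = \diag(D,D)$, equivalently $N = S \diag(D,D) S^T$. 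Combining this with the original relation $N = S^{-T} \diag(D,D) S^{-1}$ and setting $P := S^T S \in \SPD(2n)$ yields the matrix equation
$$
P \diag(D,D) P = \diag(D,D).
$$

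To conclude that $P = I_{2n}$, I would conjugate by $\diag(D,D)^{1/2}$: multiplying both sides on the left and right by $\diag(D,D)^{1/2}$ and inserting $\diag(D,D)^{1/2} \diag(D,D)^{1/2} = \diag(D,D)$ between the two factors of $P$, the left-hand side becomes $\bigl(\diag(D,D)^{1/2} P \diag(D,D)^{1/2}\bigr)^2$ while the right-hand side becomes $\diag(D,D)^2$. Since $\diag(D,D)^{1/2} P \diag(D,D)^{1/2}$ is spd (being congruent to $P$) and $\diag(D,D)$ is its unique spd square root, we get $\diag(D,D)^{1/2} P \diag(D,D)^{1/2} = \diag(D,D)$, whence $P = I_{2n}$. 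Thus $S^T S = I_{2n}$, so $S$ is orthogonal and, combined with symplecticity, orthosymplectic.

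I expect the main subtlety to lie not in the final square-root argument, which is standard, but in carefully extracting the second diagonalization $N = S \diag(D,D) S^T$ from the first one $N = S^{-T}\diag(D,D) S^{-1}$. It is exactly the commutativity $N J_{2n} = J_{2n} N$ afforded by the skew-Hamiltonian hypothesis that bridges these two relations; without it, $P \diag(D,D) P = \diag(D,D)$ does not follow. As an alternative route, one could first construct an orthosymplectic diagonalizer $U$ by choosing a $J_{2n}$-compatible orthonormal basis of each eigenspace of $N$ (which is $J_{2n}$-invariant) and then invoke Theorem~\ref{prop:trackingS} together with Lemma~\ref{lem:tracingS} to conclude that every other symplectic diagonalizer differs from $U$ by an orthosymplectic factor; but the direct proof above is shorter and more self-contained.
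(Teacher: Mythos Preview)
Your proof is correct and takes a genuinely different route from the paper's. The paper argues in two steps using external results: it first invokes the real skew-Hamiltonian--Schur form from \cite{BennKM05} to produce \emph{one} orthosymplectic diagonalizer $K$ of $N$ (positivity forces the Schur form to be diagonal), and then applies \cite[Proposition~8.12]{deGo06}, which says that any two symplectic diagonalizers differ by an orthosymplectic factor, to conclude $K^{-1}S\in\OrS(2n)$ and hence $S\in\OrS(2n)$. Your argument is direct and self-contained: you exploit the commutation $NJ_{2n}=J_{2n}N$ (equivalent to the skew-Hamiltonian property for symmetric $N$) together with the symplectic identities to obtain a second diagonalization $N=S\,\diag(D,D)\,S^T$, whence $P\,\diag(D,D)\,P=\diag(D,D)$ with $P=S^TS$, and finish via the unique spd square root. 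Your approach avoids both external references and is arguably more elementary; the paper's approach, on the other hand, has the side benefit of exhibiting an explicit orthosymplectic diagonalizer and of tying the result to the structured Schur theory that is reused elsewhere (Algorithm~\ref{alg:skewHamSchur}). The alternative you sketch at the end---building an orthosymplectic diagonalizer eigenspace by eigenspace and then invoking Theorem~\ref{prop:trackingS}---is essentially a variant of the paper's strategy with \cite[Proposition~8.12]{deGo06} replaced by the paper's own Theorem~\ref{prop:trackingS}.
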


\begin{proof}
It has been constructively shown in \cite{BennKM05}
that any skew-Hamiltonian matrix $N$ can be transformed into a real skew-Hamiltonian-Schur form 
\begin{equation}
K^TN K=\begin{bmatrix} \Omega_{11} & \Omega_{12}\\ 0& \Omega_{11}^T\end{bmatrix}, 
\label{eq:skewSchur}
\end{equation}
where $K\in \OrS(2n)$ and $\Omega_{11}\in\mathbb{R}^{n\times n}$ is quasi-triangular with diagonal blocks of order one and two corresponding, respectively, to real and complex standard eigenvalues of $N$. Since $N$ is spd, we obtain that $\Omega_{11}$ is diagonal and $\Omega_{12}=0$. Thus, $K$ symplectically diagonalizes $N$.

Let $S$ be any symplectically diagonalizing matrix of $N$ and let $K\in \OrS(2n)$ be the diagonalizing matrix as in \eqref{eq:skewSchur}. Then by \cite[Proposition~8.12]{deGo06}, 
	we have $K^{-1}S \in \OrS(2n)$.
	This implies that $S \in \OrS(2n)$.
\end{proof}

It immediately follows from Proposition~\ref{prop:Diagonalizing_Orthosympl} that the standard eigenvalues of
an~spd and skew-Hamiltonian matrix $N$ coincide with the symplectic eigenvalues. Moreover, we obtain
that the symplectically diagonalizing matrix
of $N$ constructed by Algorithm~\ref{alg:WilliamsonDiag} is orthosymplectic.

An alternative method for computing Williamson's diagonal form of $N$, based on the construction of the skew-Hamiltonian-Schur form \eqref{eq:skewSchur} as presented in \cite[Algorithm~10]{BennKM05}, is now summarized in Algorithm~\ref{alg:skewHamSchur}.
Note that this algorithm is strongly backward stable and costs about $23n^3$ flops.

\begin{algorithm}
	\caption{Williamson's diagonal form for spd and skew-Hamiltonian matrices}
	\label{alg:skewHamSchur}
	\begin{algorithmic}[1]
		\REQUIRE $N\in\mathbb{R}^{2n\times 2n}$ is spd and skew-Hamiltonian.
		\ENSURE $K\in\OrS(2n)$, $D=\diag (d_1,\ldots,d_n)$ such that  $K^TNK = \diag(D,D)$.
		\STATE Compute the symmetric Paige/Van Loan form $N=U \diag(\Omega_{1},\Omega_{1})U^T$ with \mbox{$U\in\OrS(2n)$}
		and tridiagonal $\Omega_{1}\in \SPD(n)$ as described in \cite{VanL84}.
		\STATE Compute the symmetric Schur form  $\Omega_{1}= Q_1DQ_1^T$, where $Q_1$ is orthogonal and
$D=\diag(d_1,\ldots, d_n)$.  
		\STATE Compute the orthosymplectic matrix $K=U \diag(Q_1,Q_1)$. 
	\end{algorithmic}
\end{algorithm}

\section{Symplectic trace minimization problem}\label{sec:SymplTraceMin} 
In this section, we establish the connection between the symplectic EVP and the symplectic trace minimization problem. The following result is one of the main sources that inspire our work. 

\begin{theorem}\label{theo:SymplMin}\textup{{\cite{Hiro06,BhatJ15}}}
Let a~matrix $M\in\SPD(2n)$ have symplectic eigenvalues \mbox{$d_1\leq\ldots\leq d_n$}. Then for any integer $1\leq k \leq n$, it holds
\begin{equation}\label{eq:SymplOptimProb}
2\sum_{j=1}^{k}d_j = \min\limits_{X\in\R^{2n\times 2k}} f(X):= \tr(X^TMX)\quad \mbox{s.t.}\quad h(X):=X^TJ_{2n}X -J_{2k}=0.
\end{equation}
\end{theorem}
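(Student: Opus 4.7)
The plan is threefold: confirm existence of a minimum by compactness of sublevel sets, exhibit a feasible point reaching $2\sum_{j=1}^k d_j$, and use first-order optimality to show that no critical value can be smaller.

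\emph{Existence and upper bound.} Since $M\in\SPD(2n)$, we have $f(X)=\tr(X^TMX)\ge \lambda_{\min}(M)\|X\|_F^2$, so $f$ is coercive on the closed set $\Spkn$; every sublevel set is therefore compact and a minimizer exists. Let $S\in\Spn$ be a symplectically diagonalizing matrix provided by Williamson's theorem, so $S^TMS=\diag(D,D)$ with $D=\diag(d_1,\ldots,d_n)$. Set $E=[e_1,\ldots,e_k,e_{n+1},\ldots,e_{n+k}]\in\mathbb{R}^{2n\times 2k}$ and $X_{1:k}=SE$. A direct check yields $X_{1:k}^TJ_{2n}X_{1:k}=E^TJ_{2n}E=J_{2k}$, so $X_{1:k}\in\Spkn$; moreover, $X_{1:k}^TMX_{1:k}=E^T\diag(D,D)E=\diag(D_{1:k},D_{1:k})$ with $D_{1:k}=\diag(d_1,\ldots,d_k)$, hence $f(X_{1:k})=2\sum_{j=1}^k d_j$. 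This proves the inequality $\min_{X\in\Spkn}f(X)\le 2\sum_{j=1}^k d_j$.

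\emph{Lower bound via Lagrange multipliers.} The constraint differential $\mathrm{D}h(X)[H]=H^TJ_{2n}X+X^TJ_{2n}H$ is surjective onto $\skewset(2k)$ at every $X\in\Spkn$, since $X$ has full column rank and $J_{2n}$ is invertible, so the linear constraint qualification is satisfied. Consequently, any critical point $X$ of $f|_{\Spkn}$ obeys $MX=J_{2n}X\Lambda$ for some $\Lambda\in\skewset(2k)$; equivalently $HX=X\Lambda$ with $H=J_{2n}^TM$. By Proposition~\ref{prop:SEVandEV}, $H$ is Hamiltonian with eigenvalues $\{\pm\i d_j\}_{j=1}^n$, and it is diagonalizable (since $M^{1/2}HM^{-1/2}$ is real skew-symmetric). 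Left-multiplying $MX=J_{2n}X\Lambda$ by $X^T$ gives $X^TMX=J_{2k}\Lambda$, which is therefore spd; equating $J_{2k}\Lambda$ with its transpose forces the commutation $J_{2k}\Lambda=\Lambda J_{2k}$. This pins down $\Lambda$ to the block form $\Lambda=\begin{bmatrix}A&B\\-B&A\end{bmatrix}$ with $A\in\skewset(k)$ and $B\in\symset(k)$. The identification $(u,v)^T\leftrightarrow u+\i v$ then realizes $\Lambda$ as the skew-Hermitian matrix $A-\i B$ on $\mathbb{C}^k$, and $J_{2k}\Lambda$ as the Hermitian matrix $-B-\i A=-\i(A-\i B)$. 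Since the eigenvalues of $\Lambda$ form a subset (with multiplicity) of the spectrum $\{\pm\i d_j\}$ of $H$, and the spectrum of $J_{2k}\Lambda$ must be positive, the eigenvalues of $-\i(A-\i B)$ are exactly $d_{j_1},\ldots,d_{j_k}$ for some index set $\calI_k=\{j_1,\ldots,j_k\}\subset\{1,\ldots,n\}$ of size $k$.

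\emph{Conclusion.} Taking traces, $\tr(X^TMX)=\tr(J_{2k}\Lambda)=2\sum_{j\in\calI_k}d_j$ at every critical point. Since the minimum is attained at such a point, $\min_{X\in\Spkn}f(X)=\min_{|\calI_k|=k}2\sum_{j\in\calI_k}d_j=2\sum_{j=1}^k d_j$, matching the upper bound realized by $X_{1:k}$. The main technical hurdle is the eigenstructure analysis of $\Lambda$ under the two constraints $[J_{2k},\Lambda]=0$ and $J_{2k}\Lambda\succ 0$: one must simultaneously exploit the skew-Hermitian structure of $A-\i B$ (forcing $\Lambda$'s spectrum to be purely imaginary) and the positive-definiteness of its $\i$-rotation (selecting the ``correct signs'' $+\i d_j$ rather than $-\i d_j$) so as to identify each critical value with a sum of the form $2\sum_{j\in\calI_k}d_j$.
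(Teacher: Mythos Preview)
The paper does not supply a proof of this theorem; it is quoted from \cite{Hiro06,BhatJ15} and used as a starting point for the subsequent analysis. So there is no ``paper's own proof'' to match against. That said, your argument is correct and self-contained.

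Your route is closely related to, but not identical with, the machinery the paper develops afterwards. Both you and the paper's Theorem~\ref{prop:Critical_Orthosympl} start from the KKT identity $MX_*=J_{2n}X_*L_*$ and observe that $X_*^TMX_*=J_{2k}L_*$ is spd. The paper then notes that $J_{2k}L_*$ is skew-Hamiltonian and applies Proposition~\ref{prop:Diagonalizing_Orthosympl} (Williamson for spd skew-Hamiltonian matrices) to produce an orthosymplectic $K$ with $K^TX_*^TMX_*K=\diag(\Lambda,\Lambda)$, thereby identifying the critical value with $2\,\tr(\Lambda)$ and the columns of $X_*K$ with a symplectic eigenvector set. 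You instead extract the commutation $[J_{2k},\Lambda]=0$, pass to the complex $k\times k$ realization $A-\i B$, and use that $HX=X\Lambda$ exhibits $\Lambda$ as the restriction of the diagonalizable operator $H=J_{2n}^TM$ to an invariant subspace, so its spectrum is a sub-multiset of $\{\pm\i d_j\}$; positivity of $J_{2k}\Lambda$ then selects the signs. Both approaches yield the same conclusion $f(X_*)=2\sum_{j\in\calI_k}d_j$ at every critical point; yours avoids re-invoking Williamson at size $2k$, while the paper's buys the stronger structural statement that $X_*K$ is a bona fide symplectic eigenvector set. For comparison, the original Bhatia--Jain argument (alluded to at the end of Appendix~\ref{app:Anotherway}) proceeds via doubly superstochastic matrices rather than Lagrange multipliers.

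One point worth tightening: the step ``the eigenvalues of $-\i(A-\i B)$ are exactly $d_{j_1},\ldots,d_{j_k}$ for some index set $\calI_k$ of size $k$'' tacitly uses a multiplicity count. From $HX=X\Lambda$ and diagonalizability of $H$ one gets that the $2k$ eigenvalues of the real matrix $\Lambda$ form a sub-\emph{multiset} of $\{\pm\i d_j\}$; combining this with the conjugate-pairing of the spectrum of the real $\Lambda$ yields that the multiset $\{\nu_l\}$ of positive imaginary parts embeds injectively into $\{d_j\}$, which is what lets you choose distinct indices $j_1,\ldots,j_k$. This is routine, but as written it is asserted rather than shown.
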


Due to the constraint condition, the problem \eqref{eq:SymplOptimProb} can be viewed as 
the minimization problem restricted to the symplectic Stiefel manifold $\Spkn$. The following lemma
establishes the homogeneity of the cost function $f$ on $\OrS(2k)$.

\begin{lemma}\label{lem:homog}
Let $M\in\SPD(2n)$. For $X\in \Spkn$ and $K \in \OrS(2k)$, the cost function $f$ in 
\eqref{eq:SymplOptimProb} satisfies $f(XK) = f(X)$.
\end{lemma}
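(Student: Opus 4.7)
The statement is essentially a direct consequence of two facts: the cyclic invariance of the trace, and the orthogonality of $K$ (which is built into the definition of $\OrS(2k)$). My plan is therefore to do just a one-line computation rather than invoke any deeper structure.

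Specifically, I would start from the definition $f(XK)=\tr\bigl((XK)^{T} M (XK)\bigr)=\tr\bigl(K^{T} X^{T} M X K\bigr)$, then use the cyclic property of the trace to rewrite this as $\tr\bigl(X^{T} M X \, K K^{T}\bigr)$. Since $K\in\OrS(2k)$, by definition $K$ is orthogonal as well as symplectic, so $KK^{T}=I_{2k}$. Substituting gives $f(XK)=\tr(X^{T} M X)=f(X)$.

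No obstacle is anticipated: the symplecticity of $K$ plays no role here, only its orthogonality, and the argument does not even require $X$ to lie in $\Spkn$. I would mention, however, that the symplecticity of $K$ ensures $XK\in\Spkn$ (by Proposition~\ref{prop:SymplGroup}, as products of symplectic matrices are symplectic), so that the equality $f(XK)=f(X)$ can be read as a homogeneity property of $f$ restricted to the manifold, which is exactly how the result will be used in the subsequent analysis of critical points and minimizers of the trace minimization problem \eqref{eq:SymplOptimProb}.
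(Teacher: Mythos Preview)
Your proof is correct and essentially identical to the paper's: both note that $XK\in\Spkn$ and then use orthogonality of $K$ together with the cyclic (equivalently, similarity) invariance of the trace to conclude $f(XK)=\tr(K^TX^TMXK)=\tr(X^TMX)=f(X)$.
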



%
\begin{proof}
For $X\in \Spkn$ and $K \in \OrS(2k)$, we obtain that $XK \in \Spkn$ and 
$$f(XK) = \mbox{tr}(K^TX^TMXK) = \mbox{tr}(K^{-1}X^TMXK) = \mbox{tr}(X^TMX) = f(X).$$
Here, we used the fact that similar matrices have the same trace.
\end{proof}

\subsection{Critical points}
First, we investigate the critical points of the optimization problem \eqref{eq:SymplOptimProb}. 
For this purpose, we will invoke the associated Lagrangian function
\begin{equation*}
\mathcal{L}(X,L) = \tr(X^TMX) - \tr(L(X^TJ_{2n}X-J_{2k})),
\end{equation*}
where $L\in \mathbb{R}^{2k\times 2k}$ is the Lagrangian multiplier. Since the constraint function 
$h$ maps $\mathbb{R}^{2n\times 2k}$ into $\mathcal{S}_{\rm skew}(2k)$, 
the Lagrangian multiplier $L$ can also be taken skew-symmetric. 
The gradient of $\mathcal{L}$ with respect to the first argument at $(X,L)$ takes the form
\begin{equation}\label{eq:gradLagrange}
\nabla_X\mathcal{L}(X,L) =  2MX - 2J_{2n}XL.
\end{equation}
Furthermore, the action of the Hessian of $\mathcal{L}$ with respect to the first argument on $(W,W) \in \Rbnk \times \Rbnk$ reads
\begin{equation}\label{eq:HessLagrange}
\nabla^2_{XX}\mathcal{L}(X,L)[W,W] = 2\,\tr\bigl(W^T(MW -J_{2n}W L)\bigr).
\end{equation}

Next, let us recall the first- and the second-order necessary optimality conditions \cite{NoceW06} for the constrained optimization problem \eqref{eq:SymplOptimProb}. A point $X_*\in \Rbnk$ is called 
a~\emph{critical point} of the problem \eqref{eq:SymplOptimProb} if $h(X_*)=0$ and there exists 
a~Lagrangian multiplier $L_* \in \mathcal{S}_{\rm skew}(2k)$ such that $\nabla_X\mathcal{L}(X_*,L_*) = 0$. These conditions are
known as the Karush-Kuhn-Tucker conditions. The first condition implies that $X_*\in\Spkn$. Using \eqref{eq:gradLagrange}, the stationarity condition
can equivalently be written as
\begin{equation}\label{eq:KKTcond}
MX_* = J_{2n}X_*L_*.
\end{equation}
 Comparing \eqref{eq:SymplEigvectorSet_2} with \eqref{eq:KKTcond}, we obtain that any normalized symplectic eigenvector set $X$ of $M$ is a critical point with the  Lagrangian multiplier 
	$$L_*=\left[\begin{matrix}
		0 & -\Lambda\\
		\Lambda & \enskip 0
\end{matrix}\right].$$
In this case, multiplying \eqref{eq:KKTcond} with $X_*^T$ on the left and taking the trace of the resulting equality lead to 
\begin{equation}\label{eq:valueateigenvectorset}
f(X_*) = 2\tr(\Lambda)=2(\lambda_1+\cdots+\lambda_k).
\end{equation} 
The critical point $X_*\in \Rbnk$ with the associated Lagrangian multiplier $L_*$ is said to satisfy the second-order necessary optimality condition if 
\begin{equation*}
\nabla^2_{XX}\mathcal{L}(X_*,L_*)[W,W] = 2\,\tr\bigl(W^T(MW -J_{2n}W L_*)\bigr)\geq 0
\end{equation*}
for all \mbox{$W\! \in \mbox{null}\bigl(\mathrm{D} h(X_*)\bigr)\!:=\!\{Y\! \in\! \mathbb{R}^{2n\times 2k}: 
 \mathrm{D} h(X_*)[Y]=Y^T\!J_{2n}X_* \!+\!X_*^TJ_{2n}Y\! = 0\}$}.

Based on Proposition~\ref{prop:Diagonalizing_Orthosympl}, we can characterize the critical points
of the optimization problem \eqref{eq:SymplOptimProb} as follows.

\begin{theorem}\label{prop:Critical_Orthosympl}
	Let $M\in\SPD(2n)$.
\begin{itemize}
\item[i)] If $X_* \in \Spkn$ is a critical point of \eqref{eq:SymplOptimProb}, then for any $K \in \OrS(2k)$, the matrix $X_*K$ is also a critical point of \eqref{eq:SymplOptimProb}.
\item[ii)] A~matrix $X_*\in \Spkn$ is a critical point of \eqref{eq:SymplOptimProb} if and only if there exists $K \in \OrS(2k)$ such that the columns of $X_*K$ form a~normalized symplectic eigenvector set of $M$. 
\end{itemize}
\end{theorem}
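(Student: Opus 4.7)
\textbf{Plan for proving Theorem~\ref{prop:Critical_Orthosympl}.}

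For part (i), the strategy is a direct computation. Starting from the KKT condition $MX_* = J_{2n}X_*L_*$ with $L_* \in \skewset(2k)$, right-multiplication by $K \in \OrS(2k)$ yields $M(X_*K) = J_{2n}X_* L_* K = J_{2n}(X_*K)(K^T L_* K)$, where the second equality uses $K^{-1} = K^T$. Since $X_*K \in \Spkn$ (product of symplectic matrices) and $K^T L_* K$ inherits skew-symmetry from $L_*$, the matrix $X_*K$ satisfies the KKT conditions with Lagrangian multiplier $K^T L_* K$.

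For part (ii), the $(\Leftarrow)$ direction is immediate: if the columns of $X_*K$ form a normalized symplectic eigenvector set of $M$, then by \eqref{eq:KKTcond} together with the paragraph following it, $X_*K$ is a critical point with multiplier of block form $\bigl[\begin{smallmatrix}0&-\Lambda\\\Lambda&0\end{smallmatrix}\bigr]$; applying part (i) with $K^{-1} \in \OrS(2k)$ (the set is a group by Proposition~\ref{prop:SymplGroup}(ii)) recovers $X_*$ as a critical point as well.

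The substantive direction is $(\Rightarrow)$. My plan is to reduce the problem to a symplectic diagonalization at the $2k \times 2k$ level. Given a critical point $X_*$ with multiplier $L_*$, form the reduced matrix $M' := X_*^T M X_* \in \SPD(2k)$. Multiplying $MX_* = J_{2n}X_*L_*$ on the left by $X_*^T$ and using $X_*^TJ_{2n}X_* = J_{2k}$ gives $M' = J_{2k} L_*$, equivalently $L_* = J_{2k}^T M'$. The \emph{key observation} is now that skew-symmetry of $L_*$ translates exactly into the identity $(J_{2k}^T M')^T = -J_{2k}^T M'$, which is the definition of $M'$ being skew-Hamiltonian. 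Thus $M'$ is simultaneously spd and skew-Hamiltonian, and Proposition~\ref{prop:Diagonalizing_Orthosympl} applies: any symplectic diagonalizer $K$ of $M'$ (which exists by Williamson's theorem) automatically lies in $\OrS(2k)$ and satisfies $K^T M' K = \diag(D', D')$ for some $D' = \diag(d'_1,\ldots,d'_k)$ with positive entries.

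It then remains to verify that the columns of $X_*K$ form a normalized symplectic eigenvector set. From part (i), $M(X_*K) = J_{2n}(X_*K)(K^T L_* K)$, so it suffices to compute $K^T L_* K = K^T J_{2k}^T M' K = (K^T J_{2k}^T K)(K^T M' K) = J_{2k}^T \diag(D',D')$, where the identity $K^T J_{2k}^T K = J_{2k}^T$ follows from $K \in \Spk$. A block computation gives $J_{2k}^T \diag(D',D') = \bigl[\begin{smallmatrix}0 & -D' \\ D' & 0\end{smallmatrix}\bigr]$, matching exactly the form required by \eqref{eq:SymplEigvectorSet_2}. The main obstacle is spotting the skew-Hamiltonian structure of $M'$: once this is identified, Proposition~\ref{prop:Diagonalizing_Orthosympl} does the heavy lifting and the rest is bookkeeping. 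A minor care is needed only to check that $K^{-1} = K^T$ (orthosymplecticity) is genuinely exploited when passing between $K^{-1} L_* K$ and $K^T L_* K$ in both parts of the argument.
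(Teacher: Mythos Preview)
Your proposal is correct and follows essentially the same approach as the paper's proof. Both arguments hinge on recognizing that $X_*^T M X_*$ is spd and skew-Hamiltonian, invoking Proposition~\ref{prop:Diagonalizing_Orthosympl} to obtain an orthosymplectic diagonalizer $K$, and then verifying that the transformed multiplier $K^T L_* K$ has the block form $\bigl[\begin{smallmatrix}0&-\Lambda\\\Lambda&0\end{smallmatrix}\bigr]$; your final computation is slightly more streamlined (you reuse part~(i) explicitly and insert $KK^T=I$ to factor $K^T J_{2k}^T M' K$), but the substance is identical.
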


\begin{proof}
i) If $X_*\in \Spkn$ is a~critical point of \eqref{eq:SymplOptimProb} with the associated Lagrangian multiplier $L_*$, then \eqref{eq:KKTcond} is fulfilled. Therefore, for any $K\in \OrS(2k)$, we obtain that $X_*K\in\Spkn$ and 
$
MX_*K = J_{2n}X_*L_*K = J_{2n}X_*KK^TL_*K$.
This means that $X_*K$ is also a critical point of \eqref{eq:SymplOptimProb} with the Lagrangian multiplier $K^TL_*K$.

ii) Assume that the columns of $Z_*=X_*K$ with $K\in\OrS(2k)$ form a normalized symplectic eigenvector set of $M$. Then $Z_*\in \Spkn$ is a critical point of \eqref{eq:SymplOptimProb}, and, hence, by i), 
$X_*=Z_*K^{-1}\in \Spkn$ is also a~critical point of \eqref{eq:SymplOptimProb}.

Conversely, let $X_*\in \Spkn$ be a~critical point  of \eqref{eq:SymplOptimProb}. 
Then $X_*$ satisfies  \eqref{eq:KKTcond} which immediately implies that
\begin{equation}\label{eq:proofCritical_Orthosympl_1}
X_*^TMX_* = J_{2k}L_*
\end{equation}
with a~skew-symmetric matrix $L_*$. We now show that $X_*^TMX_*^{}$ is spd and skew-Hamiltonian. Since
$M$ is spd and $X_*$ has full column rank, we obtain that $X_*^TMX_*^{}$ is spd. Furthermore, using 
 \eqref{eq:proofCritical_Orthosympl_1}, we get
$$
(J_{2k}X_*^TMX_*^{})^T= (J_{2k}J_{2k}L_*)^T=L_*=-J_{2k}J_{2k}L_*=-J_{2k}X_*^TMX_*^{}
$$ 
implying that $X_*^TMX_*^{}$ is skew-Hamiltonian.
Then by Propostion~\ref{prop:Diagonalizing_Orthosympl}, there exists $K\in\OrS(2k)$ 
such that 
\begin{equation}\label{eq:proofCritical_Orthosympl_2}
K^T(X_*^TMX_*)K = \begin{bmatrix} \Lambda & 0 \\ 0 & \Lambda\end{bmatrix}
\end{equation}
with $\Lambda = \mbox{diag}(\lambda_1,\ldots,\lambda_k)$. Using \eqref{eq:KKTcond}, \eqref{eq:proofCritical_Orthosympl_1}, \eqref{eq:proofCritical_Orthosympl_2} and $J_{2k}^TK=KJ_{2k}^T$, we deduce
\begin{equation*}
\arraycolsep=2pt
\begin{array}{rcl}
MX_*K & = & J_{2n}X_*L_*K = J_{2n}X_*J_{2k}^TKK^TJ_{2k}^{}L_*K\\
& = & J_{2n}X_*KJ_{2k}^TK^TX_*^TMX_*^{}K= J_{2n}X_*K\begin{bmatrix} 0&-\Lambda \\  \Lambda&\enskip 0\end{bmatrix}.
\end{array}
\end{equation*}
Thus, the columns of $X_*K$ form a normalized symplectic eigenvector set of $M$.
%
\end{proof}

Theorem~\ref{prop:Critical_Orthosympl} allows us
to characterize the set of all critical points of the problem \eqref{eq:SymplOptimProb}, and particularly the set of all minimizers as we will see in the next subsection.
%


\begin{corollary}\label{corol:CriticalClasses}
	The set of all critical points of the minimization problem \eqref{eq:SymplOptimProb} is the union of all $X\OrS(2k)$, where the columns of $X\in\Spkn$ form any possible normalized symplectic eigenvector set of $M$. 
\end{corollary}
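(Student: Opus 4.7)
The plan is to deduce the corollary directly from Theorem~\ref{prop:Critical_Orthosympl}, treating it as essentially a reformulation of part~(ii) in set-theoretic language, with Proposition~\ref{prop:SymplGroup}(ii) (the group structure of $\OrS(2k)$) used to tidy up the bookkeeping.

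For the inclusion $\supseteq$, I would pick an arbitrary element $Z = YK$ of the candidate set, where the columns of $Y \in \Spkn$ form a normalized symplectic eigenvector set of $M$ and $K \in \OrS(2k)$. Comparing \eqref{eq:SymplEigvectorSet_2} with \eqref{eq:KKTcond} shows that $Y$ is itself a~critical point of~\eqref{eq:SymplOptimProb} (with Lagrangian multiplier of the specific block form $\left[\begin{smallmatrix} 0 & -\Lambda \\ \Lambda & \ 0 \end{smallmatrix}\right]$). Then Theorem~\ref{prop:Critical_Orthosympl}(i) applied to $Y$ and $K$ immediately yields that $Z = YK$ is also a~critical point.

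For the inclusion $\subseteq$, I would take any critical point $X_* \in \Spkn$ of~\eqref{eq:SymplOptimProb}. By Theorem~\ref{prop:Critical_Orthosympl}(ii) there exists $K \in \OrS(2k)$ such that the columns of $Y := X_* K$ form a~normalized symplectic eigenvector set of $M$. Writing $X_* = Y K^{-1}$ and invoking Proposition~\ref{prop:SymplGroup}(ii), which says that $\OrS(2k)$ is a group and hence $K^{-1} \in \OrS(2k)$, we obtain $X_* \in Y\,\OrS(2k)$, so $X_*$ belongs to the claimed union.

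There is no real obstacle here; the only subtlety worth stating explicitly is the group property, which guarantees that the set $Y\,\OrS(2k)$ is a well-defined orbit (so that $X_* = Y K^{-1}$ lies in it and the two inclusions match). The whole proof can be written in a handful of lines, so in the final text I would condense it into a single short paragraph that quotes Theorem~\ref{prop:Critical_Orthosympl} and Proposition~\ref{prop:SymplGroup}(ii) and leaves the verification to the reader.
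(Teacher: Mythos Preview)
Your proposal is correct and matches the paper's intent: the paper gives no separate proof of Corollary~\ref{corol:CriticalClasses}, treating it as an immediate reformulation of Theorem~\ref{prop:Critical_Orthosympl}, which is exactly what you do. The only thing you make explicit that the paper leaves tacit is the use of the group property of $\OrS(2k)$ (Proposition~\ref{prop:SymplGroup}(ii)) to pass from $X_*K=Y$ to $X_*\in Y\,\OrS(2k)$.
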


\begin{remark} 
We can extend Theorem~\textup{\ref{prop:trackingS}} to the case $S\in\Spkn$ with $k<n$ by the same proof. Now, the picture is clear. We have three different tools to track different objects: $\Spk$ for tracking the symplectic matrices that span the same subspace {\rm(}Proposition~\textup{\ref{prop:SymplGroup}{(iii)}}{\rm)}, the ``{\normalfont dab}" set for the symplectically diagonalizing matrices of $M$ {\rm (}Theorem~\textup{\ref{prop:trackingS}}{\rm )},  and 
$\OrS(2k)$ for the set of feasible points at which the value of the cost function $f$ in \eqref{eq:SymplOptimProb} is the same {\rm (}Lemma~\textup{\ref{lem:homog}}{\rm )} and for the set of all critical points of \eqref{eq:SymplOptimProb} {\rm (}Theorem~\textup{\ref{prop:Critical_Orthosympl}}{\rm )}.
\end{remark}

\subsection{Local and global minimizers}
We now investigate the local and global minimizers of the optimization problem \eqref{eq:SymplOptimProb}.

\begin{theorem}\label{prop:minimizers_Orthosympl}
	Let $M\in\SPD(2n)$.
\begin{itemize}
\item[i)] If $X_* \in \Spkn$ is a global minimizer of \eqref{eq:SymplOptimProb}, then for any $K \in \OrS(2k)$, the matrix $X_*K$ is also a global minimizer of \eqref{eq:SymplOptimProb}.
\item[ii)] A~matrix $X_*\in \Spkn$ is a global minimizer of \eqref{eq:SymplOptimProb} if and only if there \mbox{exists} $K \in \OrS(2k)$ such that the columns of $X_*K$ form
a~normalized symplectic eigenvector set of $M$ associated with the symplectic eigenvalues 
$d_1,\ldots,d_k$. 
\end{itemize}
\end{theorem}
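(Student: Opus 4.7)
The plan is to derive both parts quickly from machinery already in hand, namely Lemma~\ref{lem:homog}, Theorem~\ref{theo:SymplMin}, the identity \eqref{eq:valueateigenvectorset}, and the critical-point characterization in Theorem~\ref{prop:Critical_Orthosympl}(ii). Part (i) is essentially a one-liner: for $K\in\OrS(2k)$ and $X_*\in\Spkn$, the product $X_*K$ lies in $\Spkn$ (product of symplectic matrices), and Lemma~\ref{lem:homog} gives $f(X_*K)=f(X_*)$, so $X_*K$ attains the same (minimal) value and is therefore also a global minimizer.

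For the sufficiency direction of part (ii), I would argue as follows. Suppose that for some $K\in\OrS(2k)$ the columns of $X_*K$ form a normalized symplectic eigenvector set of $M$ associated with $d_1,\ldots,d_k$. Then \eqref{eq:valueateigenvectorset} yields $f(X_*K)=2(d_1+\cdots+d_k)$, which by Theorem~\ref{theo:SymplMin} is exactly the minimum of $f$ over $\Spkn$. Hence $X_*K$ is a global minimizer, and by part (i) applied to $K^{-1}\in\OrS(2k)$, so is $X_*=(X_*K)K^{-1}$.

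For the necessity direction, assume $X_*\in\Spkn$ is a global minimizer. Since global minimizers are critical points, Theorem~\ref{prop:Critical_Orthosympl}(ii) furnishes some $K\in\OrS(2k)$ such that the columns of $X_*K$ form a normalized symplectic eigenvector set of $M$ associated with certain symplectic eigenvalues $d_{i_1},\ldots,d_{i_k}$, with $1\le i_1<\cdots<i_k\le n$, as in the remark following \eqref{eq:SymplEigvectorSet_2}. By Lemma~\ref{lem:homog}, $f(X_*K)=f(X_*)$, and by \eqref{eq:valueateigenvectorset} and Theorem~\ref{theo:SymplMin},
\begin{equation*}
2(d_{i_1}+\cdots+d_{i_k})=f(X_*K)=f(X_*)=2(d_1+\cdots+d_k).
\end{equation*}

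The only remaining step, which I expect to be the subtle one because of possibly repeated symplectic eigenvalues, is to conclude from this equality that $d_{i_j}=d_j$ for every $j=1,\ldots,k$. Since the symplectic eigenvalues are ordered nondecreasingly and $i_j\ge j$, we have $d_{i_j}\ge d_j$ for each $j$, so the summed equality forces termwise equality. Therefore the columns of $X_*K$ form a normalized symplectic eigenvector set associated with $d_1,\ldots,d_k$ (as a multiset of values, even if the chosen indices $i_j$ differ from $j$ in the presence of repeated eigenvalues), completing the proof.
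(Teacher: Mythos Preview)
Your proof is correct and follows essentially the same approach as the paper's: part~(i) via Lemma~\ref{lem:homog}, sufficiency in~(ii) via \eqref{eq:valueateigenvectorset} and Theorem~\ref{theo:SymplMin}, and necessity via Theorem~\ref{prop:Critical_Orthosympl}(ii) together with the sum comparison $\sum d_{i_j}=\sum d_j$ forcing $d_{i_j}=d_j$. Your justification of this last step (using $i_j\ge j$ and monotonicity to get $d_{i_j}\ge d_j$ termwise) is in fact slightly more explicit than the paper's phrasing.
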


\begin{proof}
i) Let $X_*\in \Spkn$ is a global minimizer of \eqref{eq:SymplOptimProb} and let $K \in \OrS(2k)$. Then 
$X_*K\in \Spkn$. Furthermore, by Lemma~\ref{lem:homog} we obtain  
$f(X_*K)=f(X_*)$, and, hence, $X_*K$ is a global minimizer of \eqref{eq:SymplOptimProb}.

ii) In view of Lemma~\ref{lem:homog} and \eqref{eq:valueateigenvectorset}, the sufficiency immediately follows from $f(X_*)=f(X_*K)=2(d_1 + \cdots + d_k)$ for any $K\in \OrS(2k)$. Conversely, if $X_*$ is a~mi\-ni\-mi\-zer, it must be a critical point. 
Due to Theorem~\ref{prop:Critical_Orthosympl}, there exists $K \in \OrS(2k)$ such that $X_*K$ is a normalized symplectic eigenvector set corresponding to a~set of symplectic eigenvalues, say $d_{i_1},\ldots,d_{i_k}$. 
Taking again Lemma~\ref{lem:homog} and \eqref{eq:valueateigenvectorset} into account, we deduce from this fact that 
$$2(d_1 + \cdots + d_k) = f(X_*) = f(X_*K) =  2(d_{i_1}+\cdots+d_{i_k}).$$ 
Because all $d_{i_j}$,~$j=1,\ldots,k$, are taken from the set of positive numbers, where $d_i$, $i=1,\ldots,k$, are the $k$ smallest ones, we can conclude, after a reordering if necessary, that $d_{i_j} = d_j$ for $j=1,\ldots,k$.
\end{proof}

In Appendix~\ref{app:Anotherway}, we present an~alternative proof of the necessity in Theorem~\ref{prop:minimizers_Orthosympl}(ii) which does not rely on Theorem~\ref{prop:Critical_Orthosympl}.

Similarly to Corollary~\ref{corol:CriticalClasses}, we can now characterize the set of global minimizers of the problem
\eqref{eq:SymplOptimProb}.
 
\begin{corollary}\label{corol:MimimizersClasses}
The set of all global minimizers of \eqref{eq:SymplOptimProb} is the union of all $X_{1:k}\OrS(2k)$, where the columns of $X_{1:k}\in\Spkn$ form a~normalized symplectic eigenvector set of $M$ associated with the symplectic eigenvalues $d_1,\ldots,d_k$. 
\end{corollary}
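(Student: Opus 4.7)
The plan is to derive the corollary as a direct reformulation of Theorem~\ref{prop:minimizers_Orthosympl}(ii), translating the existential statement ``there exists $K\in\OrS(2k)$'' into membership in the orbit $X_{1:k}\OrS(2k)$. The essential ingredient will be the group structure of $\OrS(2k)$ guaranteed by Proposition~\ref{prop:SymplGroup}(ii), which ensures that inverses of orthosymplectic matrices are again orthosymplectic.

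For the inclusion ``$\supseteq$,'' I would start with an arbitrary element $X_{1:k}K$ of the right-hand side, where the columns of $X_{1:k}\in\Spkn$ form a~normalized symplectic eigenvector set of $M$ associated with $d_1,\ldots,d_k$ and $K\in\OrS(2k)$. Taking $K=I_{2k}$ in Theorem~\ref{prop:minimizers_Orthosympl}(ii) shows that $X_{1:k}$ is itself a~global minimizer, and then Theorem~\ref{prop:minimizers_Orthosympl}(i) immediately yields that $X_{1:k}K$ is also a~global minimizer.

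For the reverse inclusion ``$\subseteq$,'' I would let $X_*$ be an~arbitrary global minimizer of \eqref{eq:SymplOptimProb}. Theorem~\ref{prop:minimizers_Orthosympl}(ii) provides a~matrix $K\in\OrS(2k)$ such that the columns of $X_{1:k}:=X_*K$ form a~normalized symplectic eigenvector set associated with $d_1,\ldots,d_k$. Since $\OrS(2k)$ is a~group, $K^{-1}\in\OrS(2k)$, and therefore $X_*=X_{1:k}K^{-1}\in X_{1:k}\OrS(2k)$, which lies in the claimed union.

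There is no substantial obstacle: the corollary is essentially a cosmetic restatement of Theorem~\ref{prop:minimizers_Orthosympl}(ii) in the language of group orbits, entirely analogous to how Corollary~\ref{corol:CriticalClasses} follows from Theorem~\ref{prop:Critical_Orthosympl}. The only point worth flagging is that the union must range over \emph{every} admissible $X_{1:k}$, not a~single fixed representative, because when one of the smallest $k$ symplectic eigenvalues has multiplicity greater than one there exist genuinely distinct normalized symplectic eigenvector sets associated with $d_1,\ldots,d_k$, and a single orbit $X_{1:k}\OrS(2k)$ need not reach all of them from one representative.
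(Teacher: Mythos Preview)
Your proposal is correct and matches the paper's intent: the paper states Corollary~\ref{corol:MimimizersClasses} without proof, treating it as an immediate reformulation of Theorem~\ref{prop:minimizers_Orthosympl}(ii) in orbit language, exactly as you do (and analogous to how Corollary~\ref{corol:CriticalClasses} follows from Theorem~\ref{prop:Critical_Orthosympl}). Your closing caveat about the union ranging over all admissible $X_{1:k}$ is also on point and is precisely the content of Remark~\ref{rm:complete}, which the paper places right after the corollary.
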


\begin{remark}\label{rm:complete}
If $d_k<d_{k+1}$, Corollary~\textup{\ref{corol:MimimizersClasses}} can be considered as a symplectic version of the corresponding result for the standard EVP, see, e.g., \textup{\cite[Theorem~2.1]{SameT00}}. In this case, $X_{1:k}$ can be constructed by taking the $1$-st, $\ldots$, $k$-th, $(n+1)$-st, $\ldots$, $(n+k)$-th columns of any symplectically diagonalizing matrix $S$ of $M$. 
Otherwise, let $j$ be the largest index such that $d_j<d_k$. Then, the last $k- j$ columns in the first and second halves of $X_{1:k}$ can be any of those whose column indices are ranging from  $j+1$ to $j+ m_k$ and their counterparts in the second half of $S$, where $m_k$  
denotes the multiplicity of $d_k$. In all related statements in the rest of this paper, by $X_{1:k}$, we include all such cases.
\end{remark}

Next, we collect some consequences from Theorem~\ref{prop:minimizers_Orthosympl} for the case $k=n$.

\begin{corollary}\label{corol:Mimimizersk=n}
Let $M\in\SPD(2n)$. 
\begin{itemize}
\item[i)] Any critical 
point of the minimization problem \eqref{eq:SymplOptimProb} with $k=n$
is a global minimizer.
\item[ii)] The set of all global minimizers of \eqref{eq:SymplOptimProb} with $k=n$
	is $S\, \OrS(2n)$, where $S\in\Spn$ is a~symplectically diagonalizing matrix of $M$.
\end{itemize}
\end{corollary}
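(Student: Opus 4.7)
Both items can be obtained as direct specializations of the results already established for general $k$, once we observe that when $k=n$ a~normalized symplectic eigenvector set is automatically a~full symplectically diagonalizing matrix, and hence must be associated with \emph{all} symplectic eigenvalues $d_1,\ldots,d_n$. The plan is therefore to invoke Theorem~\ref{prop:Critical_Orthosympl} for~(i) and Corollary~\ref{corol:MimimizersClasses} combined with Theorem~\ref{prop:trackingS} for~(ii).

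For part~(i), suppose $X_*\in\Spn$ is a~critical point of \eqref{eq:SymplOptimProb} with $k=n$. By Theorem~\ref{prop:Critical_Orthosympl}(ii) there exists $K\in\OrS(2n)$ such that the columns of $X_*K$ form a~normalized symplectic eigenvector set of $M$ associated with some symplectic eigenvalues $d_{i_1},\ldots,d_{i_n}$. Since $k=n$, the index set $\{i_1,\ldots,i_n\}$ must coincide with $\{1,\ldots,n\}$, so $X_*K$ is in fact a~symplectically diagonalizing matrix of $M$. Using~\eqref{eq:valueateigenvectorset} together with Lemma~\ref{lem:homog} we obtain $f(X_*)=f(X_*K)=2(d_1+\cdots+d_n)$, which is the minimum value of $f$ over $\Spn$ by Theorem~\ref{theo:SymplMin}. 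Hence $X_*$ is a~global minimizer.

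For part~(ii), by Corollary~\ref{corol:MimimizersClasses} the set of all global minimizers equals the union of all cosets $X_{1:n}\OrS(2n)$, where $X_{1:n}$ ranges over every normalized symplectic eigenvector set of $M$ associated with $d_1,\ldots,d_n$. For $k=n$, each such $X_{1:n}$ is precisely a~symplectically diagonalizing matrix of $M$. It remains to show that all these cosets coincide with $S\,\OrS(2n)$ for any one fixed symplectically diagonalizing matrix $S\in\Spn$. Given another symplectically diagonalizing matrix $T$, Theorem~\ref{prop:trackingS} yields $T=SK$ for some $K\in\mathrm{dab}\bigl(\OrS(2m_1)\times\cdots\times\OrS(2m_q)\bigr)$, and by Lemma~\ref{lem:tracingS} this $K$ belongs to $\OrS(2n)$. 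Since $\OrS(2n)$ is a~group (Proposition~\ref{prop:SymplGroup}(ii)), we conclude $T\,\OrS(2n)=S\,\OrS(2n)$, so the union collapses to the single coset $S\,\OrS(2n)$.

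The argument is essentially bookkeeping; the only delicate point is ensuring that when we pass from the general-$k$ description of critical points in terms of ``some'' $k$~symplectic eigenvalues to the $k=n$ case, we genuinely obtain \emph{all} symplectic eigenvalues, which is what forces every critical point to be a~minimizer. No new optimization analysis is needed beyond what has already been developed.
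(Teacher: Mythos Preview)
Your proof is correct and matches the paper's intended route: the corollary is stated there without proof, as an immediate specialization of Theorem~\ref{prop:Critical_Orthosympl} and Theorem~\ref{prop:minimizers_Orthosympl}/Corollary~\ref{corol:MimimizersClasses} to $k=n$, which is exactly what you carry out. The one minor simplification available in~(ii) is that, to collapse the union of cosets, you do not need the full strength of Theorem~\ref{prop:trackingS}; the classical fact \cite[Proposition~8.12]{deGo06} (already quoted in Section~\ref{subsec:SymplEig}) that any two symplectically diagonalizing matrices differ by an element of $\OrS(2n)$ suffices.
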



We now consider the non-existence of non-global local minimizers. In view of Corollary~\ref{corol:Mimimizersk=n}, we restrict ourselves to the case $k<n$. A similar result for 
the gene\-ra\-li\-zed EVP can be found in \cite{KovaV95,LianLB13}. First, we state
an~important technical lemma.
\begin{lemma}\label{lem:TangentNull}
Let $M\in\SPD(2n)$ and let the columns of $X_{1:k}$ and $X_{n-k+1:n}$ form any normalized symplectic eigenvector sets associated, respectively, with the $k$ smallest and $k$ largest symplectic eigenvalues of $M$. Then for
any critical point $X_0$ of the optimization problem \eqref{eq:SymplOptimProb},
there exist
a~global minimizer $X_*\in X_{1:k}\OrS(2k)$ and an~$X^*\in X_{n-k+1:n}\OrS(2k)$ such that $X_*, X^* \in \mathrm{null}\bigl(\mathrm{D} h(X_0)\bigr)$.
\end{lemma}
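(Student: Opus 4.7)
The plan is to reduce the lemma to a matrix equation of size $2k\times 2k$, which can then be solved via a complex polar decomposition thanks to the isomorphism $\OrS(2k)\cong \mathrm{U}(k)$. First, by Theorem~\ref{prop:Critical_Orthosympl}(ii), one may write $X_0 = \tilde X_0 K_0$ with $\tilde X_0 = X_{\calI_k}$ a~normalized symplectic eigenvector set and $K_0\in\OrS(2k)$. A~direct calculation shows that $Y\zz J_{2n} X_0 + X_0\zz J_{2n} Y = K_0\zz [\tilde Y\zz J_{2n}\tilde X_0 + \tilde X_0\zz J_{2n}\tilde Y]K_0$, where $\tilde Y := Y K_0^{-1}$; hence $Y\in\mathrm{null}(\mathrm{D}h(X_0))$ iff $\tilde Y\in\mathrm{null}(\mathrm{D}h(\tilde X_0))$. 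It therefore suffices to treat the case $X_0 = X_{\calI_k}$, since any $\tilde X_*\in X_{1:k}\OrS(2k)\cap\mathrm{null}(\mathrm{D}h(\tilde X_0))$ then yields $X_* := \tilde X_* K_0\in X_{1:k}\OrS(2k)\cap\mathrm{null}(\mathrm{D}h(X_0))$.

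With $X_0 = X_{\calI_k}$, I~seek $X_* = X_{1:k}K$ with $K\in\OrS(2k)$. Setting $A := X_{1:k}\zz J_{2n} X_0$, the identity $X_0\zz J_{2n} X_* = -(X_*\zz J_{2n} X_0)\zz$ reduces the tangency condition to requiring $K\zz A$ to be symmetric. Using the eigenvector equations $M X_{1:k} = J_{2n} X_{1:k}\,\Omega_{1:k}$ and $M X_0 = J_{2n} X_0\,\Omega_0$, where $\Omega_\star = \begin{bmatrix} 0 & -\Lambda_\star \\ \Lambda_\star & 0 \end{bmatrix}$, together with the symmetry of~$M$, one obtains the intertwining relation $A\Omega_0 = \Omega_{1:k}A$. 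Partitioning $A=[A_{ij}]_{i,j=1}^{2}$ into $k\times k$ blocks and expanding this relation, strict positivity of the symplectic eigenvalues (which promotes $\lambda^2 = \mu^2$ to $\lambda = \mu$) yields, entry by entry, that $A_{22}=A_{11}$ and $A_{21}=-A_{12}$. Hence $A$ takes the orthosymplectic-type block form $\begin{bmatrix} A_{11} & A_{12} \\ -A_{12} & A_{11} \end{bmatrix}$ and commutes with~$J_{2k}$.

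By Proposition~\ref{prop:SymplGroup}(ii), the map $\begin{bmatrix} C_1 & C_2 \\ -C_2 & C_1\end{bmatrix}\mapsto C_1 + \i C_2$ is an~isomorphism from $\OrS(2k)$ onto the unitary group $\mathrm{U}(k)$ that carries real transposition to Hermitian conjugation and preserves products. Under this isomorphism, $A$ corresponds to some $A_c\in\mathbb{C}^{k\times k}$, and the requirement that $K\zz A$ be symmetric becomes: $U^* A_c$ is Hermitian, where $U\in\mathrm{U}(k)$ corresponds to~$K$. Taking a polar decomposition $A_c = V P$ with $V\in\mathrm{U}(k)$ and $P$ Hermitian positive semidefinite (which always exists), the choice $U=V$ gives $U^* A_c = P$, Hermitian; translating back furnishes the required $K$, and so $X_* := X_{1:k}K\in X_{1:k}\OrS(2k)\cap\mathrm{null}(\mathrm{D}h(X_0))$. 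Since the argument only uses that $X_{1:k}$ is a~normalized symplectic eigenvector set, running it verbatim with $X_{n-k+1:n}$ in place of $X_{1:k}$ produces~$X^*$. The main obstacle lies in the structural step of the previous paragraph: extracting the orthosymplectic-type block form of $A$ from the intertwining relation; it is exactly here that positivity of the symplectic eigenvalues is indispensable, and it is what permits $A$ to be identified with a single complex matrix, making the polar-decomposition argument applicable.
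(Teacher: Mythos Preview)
Your proof is correct and takes a genuinely different route from the paper's.

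Both proofs begin with the same reduction to the case $X_0 = X_{\calI_k}$, and both reformulate the tangency condition as a symmetry requirement on $K^TA$ with $A = X_{1:k}^TJ_{2n}X_{\calI_k}$. From there the arguments diverge. The paper implicitly takes $X_{1:k}$ and $X_{\calI_k}$ as column subsets of one fixed symplectically diagonalizing matrix, so that $A$ has the very special form $\bigl[\begin{smallmatrix}0 & O_1\\ -O_1 & 0\end{smallmatrix}\bigr]$ with $O_1$ a $0$--$1$ matrix; it then builds $K_*$ by hand, placing rotation angles $\cos\phi_l,\sin\phi_l$ at prescribed positions and verifying the four block identities directly. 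Your argument instead derives the $J_{2k}$-commuting block structure of $A$ from the intertwining relation $A\Omega_0=\Omega_{1:k}A$ (which follows from the symplectic eigenvector equations and symmetry of $M$), with no assumption on how $X_{1:k}$ and $X_{\calI_k}$ were chosen. You then pass through the ring isomorphism $\bigl[\begin{smallmatrix}C_1 & C_2\\ -C_2 & C_1\end{smallmatrix}\bigr]\mapsto C_1+\i C_2$ to $\mathbb{C}^{k\times k}$, under which ``$K^TA$ symmetric'' becomes ``$U^*A_c$ Hermitian'', and settle this by a polar decomposition $A_c=VP$.

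What each approach buys: the paper's construction is fully explicit---one can read off $K_*$ entry by entry---but it leans on the $0$--$1$ structure of $O_1$, hence on a particular choice of eigenvector sets. Your approach is cleaner and more robust: it applies to \emph{any} normalized symplectic eigenvector sets (as the lemma is stated), isolates exactly where positivity of the symplectic eigenvalues enters (promoting $\lambda^2=\mu^2$ to $\lambda=\mu$), and reduces the existence of $K$ to a standard one-line fact about polar decompositions.
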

\begin{proof} See Appendix \ref{app:proofLemTangentNull}.
\end{proof}

\begin{proposition}\label{prop:localnonglobal}
Every local minimizer of the optimization problem \eqref{eq:SymplOptimProb} is a~global one.
\end{proposition}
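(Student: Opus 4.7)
The plan is to combine Lemma~\ref{lem:TangentNull} with the second-order necessary optimality condition and reduce the proposition to a one-line comparison of cost values. Let $X_0$ be a local minimizer of \eqref{eq:SymplOptimProb}. Then $X_0$ satisfies the first-order necessary condition, so it is a critical point: there exists $L_0\in\mathcal{S}_{\rm skew}(2k)$ such that $MX_0=J_{2n}X_0L_0$. Multiplying this equation by $X_0^T$ on the left and using $X_0\in\Spkn$, I record the identity
\begin{equation*}
X_0^T M X_0 = J_{2k}L_0,
\quad\text{hence}\quad
\tr(J_{2k}L_0) = f(X_0).
\end{equation*}

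Next, I invoke Lemma~\ref{lem:TangentNull} at the critical point $X_0$ to obtain a global minimizer $X_*\in X_{1:k}\OrS(2k)$ that lies in $\mathrm{null}\bigl(\mathrm{D}h(X_0)\bigr)$. Since $X_0$ is a local minimizer and the constraint mapping $h$ has full-rank differential (as $\Spkn$ is a manifold), the second-order necessary condition applies with this same multiplier $L_0$, giving $\nabla^2_{XX}\mathcal{L}(X_0,L_0)[X_*,X_*]\geq 0$. I then evaluate this quantity explicitly using \eqref{eq:HessLagrange} together with $X_*\in\Spkn$, which implies $X_*^T J_{2n}X_* = J_{2k}$:
\begin{equation*}
\nabla^2_{XX}\mathcal{L}(X_0,L_0)[X_*,X_*]
= 2\,\tr\bigl(X_*^T M X_* - J_{2k}L_0\bigr)
= 2\bigl(f(X_*)-f(X_0)\bigr).
\end{equation*}

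Combining the last two displays yields $f(X_*)\geq f(X_0)$, while the fact that $X_*$ is a global minimizer gives the reverse inequality $f(X_*)\leq f(X_0)$. Hence $f(X_0)=f(X_*)=2(d_1+\cdots+d_k)$, which shows that $X_0$ is itself a global minimizer. The main obstacle is not the computation, which is short, but rather the input provided by Lemma~\ref{lem:TangentNull}: the nontrivial step is ensuring that a global minimizer can actually be picked inside the tangent null space $\mathrm{null}\bigl(\mathrm{D}h(X_0)\bigr)$ at the arbitrary critical point $X_0$, so that it becomes an admissible direction in the second-order test. Once that geometric fact is granted, the proof reduces, via the Lagrangian identity above, to a single application of SONC.
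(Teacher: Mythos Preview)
Your proof is correct and follows essentially the same approach as the paper: both invoke Lemma~\ref{lem:TangentNull} to place a global minimizer $X_*$ in $\mathrm{null}\bigl(\mathrm{D}h(X_0)\bigr)$, then evaluate the Hessian \eqref{eq:HessLagrange} at $X_*$ using the identity $X_0^TMX_0=J_{2k}L_0$ to compare $f(X_*)$ with $f(X_0)$. The only cosmetic difference is that the paper argues by contradiction (assuming $X_0$ is non-global and deriving a strict negative Hessian value via Corollary~\ref{corol:CriticalClasses}), whereas you proceed directly from the SONC inequality $f(X_*)\geq f(X_0)$ combined with $f(X_*)\leq f(X_0)$; your framing is slightly leaner since it does not require the explicit eigenvector representation of $X_0$.
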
 

\begin{proof}
Assume that there is a non-global local minimizer $X_0$ of the problem \eqref{eq:SymplOptimProb}. 
Since $X_0$ is a~critical point, there is an associated Lagrangian multiplier $L_0$. Moreover, by Corollary~\ref{corol:CriticalClasses}, 
$X_0$ can be represented as \mbox{$X_0 = X_{\mathcal{I}_k}K_0$},  where 
$K_0 \in \OrS(2k)$, and the columns of $X_{\mathcal{I}_k}$ form a~normalized symplectic eigenvector set associated with a~set of the symplectic eigenvalues $\{d_{i_j}, i_j\in\mathcal{I}_k\}$ in which at least one of them is greater than $d_k$. 
By Lemma~\ref{lem:TangentNull}, there exists a global minimizer $X_* \in \mathrm{null}\bigl(\mathrm{D} h(X_0)\bigr)$. On the account of \eqref{eq:HessLagrange}, we get then 
\begin{align*}
&\nabla^2_{XX}\mathcal{L}(X_0,L_0)[X_*,X_*] = 2\,\tr(X_*^TMX_* - X_*^TJ_{2n}X_*L_0) 
= 2\,\tr(X_*^TMX_* -J_{2k}L_0)\\
&\qquad= 2\,\tr(X_*^TMX_* -X_0^TMX_0) = 4\,\sum_{i=1}^k d_i -4\,\sum_{j=1}^k d_{i_j} <0,
\end{align*}
which contradicts to the second-order necessary optimality condition for $X_0$. This completes the proof.
\end{proof}

Saddle points of the cost function $f$ in the problem \eqref{eq:SymplOptimProb} can be disclosed in the following.

\begin{proposition}\label{prop:SaddlePoint}
Any normalized symplectic eigenvector set $X$ of a~matrix $M\in\SPD(2n)$ associated with a symplectic eigenvalue set $\{d_{i_j}, i_j\in \calI_k\}$, in which there is at least one $d_{i_j}$ such that 
$d_k < d_{i_j} < d_{n-k+1}$, is a saddle point of \eqref{eq:SymplOptimProb}.
\end{proposition}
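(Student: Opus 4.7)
\textbf{Proof plan for Proposition~\ref{prop:SaddlePoint}.} The plan is to show that $X$ is a critical point of \eqref{eq:SymplOptimProb} whose associated Hessian of the Lagrangian is indefinite on the tangent space $\mathrm{null}\bigl(\mathrm{D}h(X)\bigr)$. Then $X$ cannot be a local extremum, which by standard optimization terminology means it is a saddle point.

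First, since the columns of $X$ form a normalized symplectic eigenvector set of $M$, Theorem~\ref{prop:Critical_Orthosympl}(ii) (applied with $K=I_{2k}$) guarantees that $X$ is a critical point of \eqref{eq:SymplOptimProb}; let $L$ be the associated Lagrangian multiplier. The KKT relation \eqref{eq:KKTcond} together with the symplecticity of $X$ yields the identity $X^TMX = J_{2k}L$, which is exactly the relation used in the proof of Proposition~\ref{prop:localnonglobal}. By \eqref{eq:valueateigenvectorset}, we have $f(X) = 2\sum_{j=1}^k d_{i_j}$.

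Next I invoke Lemma~\ref{lem:TangentNull} applied to the critical point $X_0=X$: it produces a global minimizer $X_* \in X_{1:k}\OrS(2k)$ and a matrix $X^* \in X_{n-k+1:n}\OrS(2k)$ that both lie in $\mathrm{null}\bigl(\mathrm{D}h(X)\bigr)$. Both are symplectic, so $X_*^TJ_{2n}X_* = (X^*)^T J_{2n}X^* = J_{2k}$. Repeating verbatim the computation from the proof of Proposition~\ref{prop:localnonglobal} using \eqref{eq:HessLagrange} gives
\begin{align*}
\nabla^2_{XX}\mathcal{L}(X,L)[X_*,X_*] &= 2\,\tr\bigl(X_*^TMX_* - J_{2k}L\bigr) = 2\bigl(f(X_*) - f(X)\bigr),\\
\nabla^2_{XX}\mathcal{L}(X,L)[X^*,X^*] &= 2\,\tr\bigl((X^*)^TMX^* - J_{2k}L\bigr) = 2\bigl(f(X^*) - f(X)\bigr).
\end{align*}

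The remaining step is to check that these two numbers have opposite signs. From \eqref{eq:valueateigenvectorset} and Corollary~\ref{corol:MimimizersClasses}, $f(X_*) = 2\sum_{i=1}^k d_i$ and analogously $f(X^*) = 2\sum_{i=n-k+1}^n d_i$. The hypothesis that some $d_{i_j}$ satisfies $d_k < d_{i_j}$ means that the multiset $\{d_{i_1},\ldots,d_{i_k}\}$ is strictly larger than the multiset of the $k$ smallest symplectic eigenvalues, so $f(X_*) < f(X)$ and hence $\nabla^2_{XX}\mathcal{L}(X,L)[X_*,X_*]<0$. Symmetrically, the hypothesis $d_{i_j} < d_{n-k+1}$ for some $j$ forces $f(X^*) > f(X)$ and thus $\nabla^2_{XX}\mathcal{L}(X,L)[X^*,X^*]>0$. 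Consequently the Hessian of $\mathcal{L}$ at $(X,L)$ is indefinite on $\mathrm{null}\bigl(\mathrm{D}h(X)\bigr)$; by the second-order necessary conditions, $X$ is neither a local minimizer nor a local maximizer of \eqref{eq:SymplOptimProb}, i.e., it is a saddle point.

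The only nontrivial ingredient is Lemma~\ref{lem:TangentNull}, which has already been established and provides the crucial tangent directions; beyond that, the argument reduces to the same Hessian bookkeeping used in Proposition~\ref{prop:localnonglobal}, now applied twice in opposite directions to exhibit both signs.
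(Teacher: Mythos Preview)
Your proof is correct and follows essentially the same route as the paper: both arguments note that $X$ is a critical point, invoke Lemma~\ref{lem:TangentNull} to produce the two tangent directions $X_*\in X_{1:k}\OrS(2k)$ and $X^*\in X_{n-k+1:n}\OrS(2k)$, and then reuse the Hessian computation from the proof of Proposition~\ref{prop:localnonglobal} in both directions to exhibit a negative and a positive curvature. The paper merely states this more tersely by pointing back to that earlier proof, whereas you spell out the bookkeeping explicitly; the content is the same.
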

\begin{proof}
Obviously, $X$ is a critical point. Then it follows from the proof of Proposition~\ref{prop:localnonglobal} that $X$ is not a minimizer. Taking into account the existence of $X^*$ in Lemma~\ref{lem:TangentNull} and following the same proof of Proposition~\ref{prop:localnonglobal}, we can show that $X$ is not a maximizer of the cost function $f$ in \eqref{eq:SymplOptimProb} either. Hence, $X$ is a saddle point.
\end{proof}

\begin{remark}\label{rem:nomaxmizer}
Unfortunately, we were unable to prove that each element in the matrix set $X_{n-k+1:n}\OrS(2k)$ is a local maximizer. Nevertheless, we can show that $f$ in \eqref{eq:SymplOptimProb} has no global maximizer. Indeed, let us consider a~symplectic matrix
$$
X_a = \begin{bmatrix}
aI_{n,k} & 0\\0 & 1/aI_{n,k}
\end{bmatrix}, \quad a\not= 0,
$$
where $I_{n,k}$ denotes a $n\times k$ submatrix of $I_n$.
For any symplectically diagonalizing matrix $S$ of $M$, $SX_a \in \Spkn$. We then get that
$$
f(SX_a) = \tr(X_a^TS^TMSX_a) = 2(a^2+\frac{1}{a^2})\tr(I_{n,k}^TDI_{n,k}^{})
$$
which tends to infinity when $a\to 0$.
\end{remark}

We close this section by considering some consequences for the case $k=1$. 

\begin{corollary}\label{coroll:critical_minimizer}
	Let $M\in\SPD(2n)$ be in Williamson's diagonal form \eqref{eq:sympleigdecomp}.
	\begin{itemize}
		\item[i)] 
		The two columns of $X \in \mathrm{Sp}(2,2n)$ form a normalized symplectic eigenvector pair of $M$ if and only if $X$ is a critical point of the minimization problem \eqref{eq:SymplOptimProb} with $k=1$.
		\item[ii)] The two columns of $X_1\in \mathrm{Sp}(2,2n)$ form a~normalized symplectic eigenvector pair of $M$ associated with the smallest eigenvalue $d_1$ if and only if $X_1$ is a~global minimizer of \eqref{eq:SymplOptimProb} with $k=1$.
		\item[iii)] For any $j=2,\ldots,n-1$ such that $d_1 < d_j < d_n$, a~normalized symplectic eigenvector pair $X_j\in \mathrm{Sp}(2,2n)$ of $M$ associated with $d_j$ is a~saddle point of \eqref{eq:SymplOptimProb} with $k=1$.
	\end{itemize}
\end{corollary}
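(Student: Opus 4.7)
The plan is to derive each of the three items by specializing to $k=1$ the corresponding general result established earlier: Theorem~\ref{prop:Critical_Orthosympl}(ii) for~(i), Theorem~\ref{prop:minimizers_Orthosympl}(ii) for~(ii), and Proposition~\ref{prop:SaddlePoint} for~(iii). The crucial observation that sharpens the $k=1$ statements relative to their general counterparts is that $\OrS(2)$ coincides with the rotation group $\mathrm{SO}(2)$, every element of which commutes with $J_2$; this is a direct calculation using the standard parameterization of plane rotations.

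For item~(i), I would apply Theorem~\ref{prop:Critical_Orthosympl}(ii) to obtain a $K\in\OrS(2)$ such that $M(XK)=J_{2n}(XK)(\lambda J_2)$ for some $\lambda>0$. Right-multiplying by $K^T$ and pulling $K$ through $J_2$ via the commutation relation then yields $MX=J_{2n}X(\lambda J_2)$, so that the columns of $X$ themselves form a normalized symplectic eigenvector pair associated with $\lambda$. The converse direction is immediate, since any such pair satisfies the stationarity condition \eqref{eq:KKTcond} with Lagrangian multiplier $L_*=\lambda J_2\in\skewset(2)$.

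Item~(ii) follows by the same commutativity argument applied to Theorem~\ref{prop:minimizers_Orthosympl}(ii); the associated eigenvalue is preserved under the rotation, and Theorem~\ref{prop:minimizers_Orthosympl}(ii) forces it to be $d_1$. Item~(iii) will be a direct specialization of Proposition~\ref{prop:SaddlePoint} with $k=1$ and $\calI_1=\{j\}$: the hypothesis there, $d_k<d_{i_j}<d_{n-k+1}$, reduces exactly to $d_1<d_j<d_n$, and the saddle-point conclusion transfers verbatim.

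No serious obstacle is anticipated: the only nontrivial technical ingredient is the commutation $KJ_2=J_2K$ on $\OrS(2)=\mathrm{SO}(2)$, after which each item becomes essentially a one-line invocation of a theorem already in hand.
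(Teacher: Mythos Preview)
Your proposal is correct and is precisely the intended derivation: the paper states Corollary~\ref{coroll:critical_minimizer} without proof, implicitly as the $k=1$ specialization of Theorem~\ref{prop:Critical_Orthosympl}(ii), Theorem~\ref{prop:minimizers_Orthosympl}(ii), and Proposition~\ref{prop:SaddlePoint}, and your key observation that $\OrS(2)=\mathrm{SO}(2)$ commutes with $J_2$ is exactly what eliminates the factor $K$ and sharpens items~(i) and~(ii) to the stated ``if and only if'' form. One cosmetic remark: the eigenvector-pair relation~\eqref{eq:eigenvector_pair} reads $MX=J_{2n}X\bigl[\begin{smallmatrix}0&-\lambda\\ \lambda&0\end{smallmatrix}\bigr]=J_{2n}X(-\lambda J_2)$, so the Lagrangian multiplier is $L_*=-\lambda J_2$ rather than $\lambda J_2$; this sign does not affect your argument since $-\lambda J_2$ commutes with rotations just as well.
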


Corollary \ref{coroll:critical_minimizer} can be considered as a symplectic version of the corresponding results on the trace minimization problem for standard eigenvalues. Especially, part~(i) is similar to \cite[Proposition~4.6.1]{AbsiMS04}; part~(ii) is similar to \cite[Proposition~4.6.2(i)]{AbsiMS04} with the note that $X_{1}$ is not unique; part~(iii) is the same as \cite[Proposition~4.6.2(iii)]{AbsiMS04}. 

\section{Eigenvalue computation via Riemannian optimization}\label{sec:EigenCompviaRiemOpt}
In this section, we present a numerical method for solving the optimization problem \eqref{eq:SymplOptimProb}. It is principally a constrained optimization problem for which some existing methods can be used, see, e.g., \cite{NoceW06}. Nevertheless, maintaining the constraint is challenging. Recently, it has been shown in \cite{GSAS20} that the feasible set $\Spkn$ constitutes a Riemannian manifold. Moreover, two efficient methods were proposed there for optimization on this manifold. 
In this section, we briefly review the necessary ingredients for a~Riemannian optimization algorithm for solving \eqref{eq:SymplOptimProb} and discuss the computation of the smallest symplectic eigenvalues and
	the corresponding symplectic eigenvectors by using the presented optimization algorithm.

\subsection{Riemannian optimization on the symplectic Stiefel manifold}\label{Subsec:RiemOptStiefelMan}
Given $X\in\Spkn$, 
the tangent space of $\Spkn$ at $X$, denoted by $\TX $, can be represented as
	$\TX = \{AJ_{2n}X\; : \; A\in \mathcal{S}_{\rm sym}(2n) \}$, see~\cite[Proposition~3.3]{GSAS20} 
	for detail.
	In view of~\cite[Proposition~4.1]{GSAS20}, a Riemannian metric for $\Spkn$, called the canonical-like metric, is defined as
	\begin{align*}
	g_{\rho}(Z_1,Z_2) :=  \tr\left(  Z_1^T  \left( \frac{1}{\rho} J_{2n}XX^T J_{2n}^T -(J_{2n}XJ_{2k}X^T J_{2n}^T -J_{2n})^2 \right)  Z_2     \right),
	\end{align*}
	where $Z_1,Z_2\in\TX$ and $\rho>0$. Consequently, the associated Riemannian gradient 
	of the cost function $f$ in \eqref{eq:SymplOptimProb} has the following expression.
	
	\begin{proposition}\label{proposition:rgrad}
		Given $M\in\SPD(2n)$, the Riemannian gradient of the~function $f:\Spkn\to\mathbb{R}: X\mapsto\tr(X^T M X)$ associated with the metric $g_\rho$ is given by
		$\rgrad{X} = A_X J_{2n}X$ 
				with the matrices $A_X = 4\,\sym\left({H_X MX} (XJ_{2k})^T\right)$ and ${H_X}=I+\frac{\rho}{2}XX^T - J_{2n}X(X^T X)^{-1}X^T J_{2n}^T$.
	\end{proposition}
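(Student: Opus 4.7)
The plan is to use the defining property of the Riemannian gradient: $\rgrad{X}$ is the unique element of $T_X\Spkn$ satisfying
\[
g_\rho(\rgrad{X}, Z) = \mathrm{D}f(X)[Z] \quad \text{for all } Z \in T_X\Spkn.
\]
Accordingly, I would check two things for the proposed expression $A_X J_{2n}X$: that it belongs to the tangent space, and that it satisfies the identity above. Membership in $T_X\Spkn$ is immediate by the characterization $T_X\Spkn = \{A J_{2n}X : A \in \mathcal{S}_{\mathrm{sym}}(2n)\}$ from \cite{GSAS20}, together with the fact that the outer $\sym(\cdot)$ in the definition of $A_X$ produces a symmetric matrix.

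For the variational identity, I would first compute the Euclidean differential $\mathrm{D}f(X)[Z] = \tr(Z^T \nabla f(X)) = 2\tr(Z^T M X)$, so that $\nabla f(X) = 2MX$. The economical route is to invoke the general formula already derived in \cite[Section~4]{GSAS20} that converts the Euclidean gradient into the Riemannian one under $g_\rho$: it factors through the ambient ``raised'' gradient $G(X)^{-1}\nabla f(X)$ and an orthogonal projection onto $T_X\Spkn$ taken with respect to $g_\rho$. Substituting $\nabla f(X) = 2MX$ into that formula, the linear operator appearing in the inverse simplifies drastically using the symplectic identity $X^T J_{2n} X = J_{2k}$ (together with $J_{2n}^T J_{2n} = I_{2n}$ and $J_{2n}^2 = -I_{2n}$); the resulting inverse is precisely $H_X = I + \tfrac{\rho}{2}XX^T - J_{2n}X(X^TX)^{-1}X^T J_{2n}^T$, with the $(X^TX)^{-1}$ factor arising from the usual Sherman--Morrison--Woodbury collapse on the rank-deficient piece of the metric. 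The tangent projection then contributes the outer symmetrization, and the right multiplier $J_{2n}X$ together with the factor $(XJ_{2k})^T$ appears because tangent vectors are parameterized as $A J_{2n}X$ with symmetric $A$; the overall factor $4$ is the product of the $2$ coming from $\nabla f(X)$ and the $2$ coming from the symmetrization.

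Alternatively, one could proceed directly: substitute $\rgrad{X} = A_X J_{2n}X$ and an arbitrary tangent vector $Z = B J_{2n}X$ with $B\in \mathcal{S}_{\mathrm{sym}}(2n)$ into $g_\rho(\rgrad{X},Z) = 2\tr(Z^T MX)$, cyclically permute under the trace, and use the above symplectic identities to reduce the left-hand side to $\tr\!\bigl(B\, \phi(A_X)\bigr)$ for a computable linear map $\phi$ on $\mathcal{S}_{\mathrm{sym}}(2n)$; the right-hand side becomes $\tr\!\bigl(B\cdot 2\sym(J_{2n}X X^T M^T)\bigr)$. Matching and inverting $\phi$ yields the stated $A_X$.

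The main obstacle is the quadratic term $(J_{2n}XJ_{2k}X^T J_{2n}^T - J_{2n})^2$ in the weight: its expansion has four summands and their contractions with $J_{2n}X$ on the right require careful bookkeeping of the signs produced by $J_{2n}^T$ and $J_{2n}^2$. Once the symplectic identity $X^TJ_{2n}X = J_{2k}$ is used to collapse the redundant factors, these summands recombine to form exactly the $J_{2n}X(X^TX)^{-1}X^T J_{2n}^T$ correction inside $H_X$; the remaining $\tfrac{\rho}{2}XX^T$ term comes from the first summand of the metric. Modulo this expansion, the proof amounts to a routine substitution.
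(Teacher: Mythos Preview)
Your proposal is correct and its primary ``economical route'' is exactly the paper's own argument: compute $\nabla f(X)=2MX$ and plug it into the general Riemannian-gradient formula of \cite[Proposition~4.5]{GSAS20}. The paper states this in one line without the additional explanatory detail about how $H_X$ arises, and does not pursue your alternative direct verification.
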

	\begin{proof}
		The result directly follows from $\nabla f(X)=2MX$ and~\cite[Proposition~4.5]{GSAS20}.
	\end{proof}
	
	In~\cite{GSAS20}, two searching strategies relying on quasi-geodesics and symplectic Cayley transform 
	were proposed for the optimization on $\Spkn$. It has also been shown there that the Cayley-based method performs better than that based on quasi-geodesics. Therefore, we choose the Cayley retraction as the update formula. Specifically, the searching curve along $-\rgrad{X}\in\TX$ is defined as
	\begin{equation}\label{eq:cay-Lie}
	\mathcal{R}_X(-t\,\rgrad{X}) := \left({I + \frac{t}{2}A_XJ_{2n}}\right)^{-1}\left({I -\frac{t}{2}A_XJ_{2n}}\right)X,
	\end{equation}
	where $A_X$ is as in Proposition~\ref{proposition:rgrad}. Note that since the number $k$ of required symplectic eigenvalues is usually small, the update~\eqref{eq:cay-Lie} can be further assembled in an~efficient way suggested in~\cite[Proposition~5.4]{GSAS20}. 
	
	In Algorithm~\ref{alg:non-monotone gradient}, we present the Riemannian gradient method with non-monotone line search for solving~\eqref{eq:SymplOptimProb}. Practically, we can stop the iteration when the gradient of the cost function is smaller than a~given tolerance $\epsilon$. It has been proven in \cite[Theorem~5.6]{GSAS20} that with standard assumptions, Algorithm~\ref{alg:non-monotone gradient} generates an infinite sequence of which any accumulation point is a critical point of \eqref{eq:SymplOptimProb}. 
	\begin{algorithm}
		\caption{Riemannian gradient method for solving the problem~\eqref{eq:SymplOptimProb}}
		\label{alg:non-monotone gradient}
		\begin{algorithmic}[1]
			\REQUIRE $M \in \SPD(2n)$; $X^{(0)}\in\Spkn$; 
			$\rho>0$, $\beta, \delta\in(0,1)$, $\alpha \in [0,1]$, 
			$q_0=1$, $c_0 = f(X^{(0)})$, $\gamma_0>0$, $0<\gamma_{\min}<\gamma_{\max}=1$; 
			\ENSURE Sequence of iterates $\{X^{(m)}\}$. 
			\FOR{$m=0,1,2,\dots$}
			\STATE Set $Z^{(m)} = -\rgrad{X^{(m)}}$.
			\IF{$m>0$} 
			\STATE{ 
			$
			\gamma_m=\left\{\begin{array}{ll}
				\frac{\jkh{W^{(m-1)}, W^{(m-1)}}}{\abs{\jkh{W^{(m-1)}, Y^{(m-1)}}}} &\text{for odd } m, \\[3mm]
				\frac{\abs{\jkh{W^{(m-1)}, Y^{(m-1)}}}}{{\jkh{Y^{(m-1)}, Y^{(m-1)}}}} &\text{for even } m,
				\end{array}\right.
			$\\
			where $W^{(m-1)} = X^{(m)} - X^{(m-1)}$ and $Y^{(m-1)} =Z^{(m)} -  Z^{(m-1)}$.}
			\ENDIF
			\STATE Calculate the trial step size $\gamma_k=\max\bigl(\gamma_{\min},\min(\gamma_k,\gamma_{\max})\bigr)$.			
			\STATE Find the smallest integer $\ell$  
			such that the non-monotone condition 
			$$f\dkh{{\cal R}_{X^{(m)}}(t_m Z^{(m)})} \le c_m + \beta\, t_m\, g_\rho\dkh{\rgrad{X^{(m)}},Z^{(m)}}$$
			holds, where $t_m=\gamma_m \delta^{\ell}$. 
			\STATE Set $X^{(m+1)} = {\cal R}_{X^{(m)}}(t_m Z^{(m)})$.
			\STATE Update $q_m = \alpha q_{m-1} + 1$ and
			$c_m = \frac{\alpha q_{m-1}}{q_m} c_{m-1}  + \frac{1}{q_m} f(X^{(m)})$.
			\ENDFOR
		\end{algorithmic}
	\end{algorithm}

\subsection{Computing the symplectic eigenvalues and eigenvectors}\label{subsec:EigenComp}
First, we consider the computation of the smallest symplectic eigenvalue $d_1$ of $M$.  This case was briefly addressed in \cite{GSAS20} as an example. We review it here and discuss the computation of the corresponding normalized symplectic eigenvector pair.
Let $X_*\in\mbox{Sp}(2,2n)$ be a~minimizer computed by Algorithm~\ref{alg:non-monotone gradient}. Then we have \mbox{$d_1= f(X_*)/2$} and by Corollary~\ref{coroll:critical_minimizer}(ii) the columns of $X_*$ provide the sought normalized symplectic eigenvector pair. 

We now consider the general case $1\leq k\leq n$. Assume that $X_*$ is a minimizer of \eqref{eq:SymplOptimProb}. According to Theorem~\ref{prop:minimizers_Orthosympl}(ii), there exists $K\in \OrS(2k)$ such that 
the columns of $X_*K$ form a~normalized symplectic eigenvector set of $M$ associated with the symplectic eigenvalues $d_1,\ldots,d_k$. 
The sought matrix $K$ can be computed by symplectically diagonalizing a $2k\times 2k$ matrix $X_*^TMX_*$. As $X_*^TMX_*$ is spd and skew-Hamiltonian, we can resort to Algorithm~\ref{alg:skewHamSchur}  for the sake of efficiency.
We summarize the computation of the $k$ smallest symplectic eigenvalues of $M$ and the corresponding eigenvector set in Algorithm~\ref{alg:SymplEigProb}.

\begin{algorithm}
	\caption{Symplectic EVP via Riemannian optimization}
	\label{alg:SymplEigProb}
	\begin{algorithmic}[1]
		\REQUIRE $M\in\SPD(2n)$, $1 \leq k \leq n$.
		\ENSURE $k$ smallest symplectic eigenvalues $d_1,\ldots,d_k$ and the corresponding norma\-lized symplectic eigenvector set $X_{1:k}\in\Spkn$.
		\STATE Solve the optimization problem~\eqref{eq:SymplOptimProb} for $X_*\!\!\in\!\Spkn$ by using Algorithm~\ref{alg:non-monotone gradient}.
		\STATE Compute Williamson's diagonal form $X_*^TMX_*^{}=K\diag(D_{1:k},D_{1:k})K^T$ with 
		\mbox{$K\in\OrS(2k)$} and $D_{1:k}=\diag(d_1,\ldots,d_k)$ by using Algorithm~\ref{alg:skewHamSchur}.
		\STATE Compute $X_{1:k} = X_*K$.
	\end{algorithmic}
\end{algorithm}

 Algorithm~\ref{alg:SymplEigProb} is comparable with typical methods for large standard EVPs in the sense that we first simplify and/or reduce the size of the problem and 
 then solve the small and/or simpler (symplectic) EVP. This approach 
 may be not efficient if all symplectic eigenvalues are required. In that case,  Algorithm~\ref{alg:WilliamsonDiag}, for instance, could be used. 

\begin{remark}\label{rem:maxf}
	Unlike the standard eigenvalue trace minimization problem on the Stiefel manifold, as shown in Remark~\textup{\ref{rem:nomaxmizer}}, the cost function $f$ in \eqref{eq:SymplOptimProb} is unbounded from above. This comes from the fact that the Stiefel manifold is bounded while the symplectic Stiefel manifold is not. Therefore, we cannot find largest symplectic eigenvalues in a~similar manner, i.e., by maximizing the cost function.
		Despite this fact, the largest symplectic eigenvalues of an spd matrix $M$ can be computed by applying Algorithm~\textup{\ref{alg:SymplEigProb}} to the inverse of $M$. As in the standard case, this follows from the fact that the largest eigenvalues of $M$ are the reciprocals of the corresponding smallest ones of its inverse \textup{\cite[Theorem~8.14]{deGo06}}. This task can be done as long as the linear equation $Mx = y$ can be solved efficiently. 
\end{remark}

\subsection{Computing the eigenvalues of positive-definite Hamiltonian matrices}
\label{sec:EigSPDHamilton}
As an~application of Algorithm~\ref{alg:SymplEigProb}, we consider the computation of standard eigenvalues and their corresponding eigenvectors of pdH matrices. Due to nume\-rous applications, the EVPs for general Hamiltonian matrices have attracted a~lot of attention
and many different algorithms were developed for such problems, e.g., \cite{VanL84,BennF97,Watk04,BennKM05,BennFS11}, just to name a~few. It is noteworthy that some of these methods rely on the Hamiltonian-Schur form. Unfortunately, this form does not always exist, e.g., for real Hamiltonian matrices having purely imaginary eigenvalues, which is exactly the case for pdH matrices, see Proposition~\ref{prop:SEVandEV}. In \cite{Amod03, Amod06}, a~symplectic Lanczos method was developed for computing a few extreme eigenvalues of a~pdH matrix $H$, which exploits the symmetry and positive definiteness of its gene\-ra\-tor $M=J_{2n}^TH$.

 
Here, we present a different numerical approach for computing the eigenvalues of pdH matrices which relies on Riemannian optimization. To the best of our know\-ledge, this is the first geometric method for the special Hamiltonian EVP. Based on Proposition~\ref{prop:SEVandEV}, we propose  
to compute the smallest (in modulus) eigenvalues of a~pdH matrix $H$ by applying Algorithm~\ref{alg:SymplEigProb} to the spd matrix $M=J_{2n}^TH$.

\section{Numerical examples}\label{Sec:NumerExam}
In this section, we present some results of numerical experiments 
demonstrating the proposed Riemannian trace minimization method, henceforth called Riemannian.
The parameters in Algorithm~\ref{alg:non-monotone gradient} are set to default values as given in \cite{GSAS20}.
Although accumulation points of the iterates generated by this algorithm 
can be proven to be critical points of the cost function in \eqref{eq:SymplOptimProb} only \cite{GSAS20}, we never experience stagnation at a~saddle point.  
This fact was observed in various works and arguably explained, see \cite{LeePPSJR19} and references therein. For reference and comparison, we also report the corresponding results for
the restarted symplectic Lanczos algorithm \cite{Amod06} (symplLanczos) and 
the MATLAB function \texttt{eigs} applied to the associated Hamiltonian matrix.
All computations were done on a workstation with two Intel(R) Xeon(R) Processors Silver 4110 (at 2.10GHz$\times 8$, 12M Cache) and 384GB of RAM running 
MATLAB R2018a under Ubuntu 18.10. The code that produced the results is available from
\href{https://github.com/opt-gaobin/speig}{https://github.com/opt-gaobin/speig}.

The accuracy of computed symplectic eigenvalues and eigenvector sets of $M$ are measured
by using the normalized residual
	$$
	\frac{\left\|M \tilde{X}_{1:k}-J_{2n}\tilde{X}_{1:k}\begin{bmatrix}0 & -\tilde{D}_{1:k}\\ \tilde{D}_{1:k} & 0\end{bmatrix}\right\|_F}{\|M\tilde{X}_{1:k}\|_F},
	$$
	where $\tilde{X}_{1:k}$ is the computed symplectic eigenvector set associated with the symplectic eigenvalues on the diagonal of $\tilde{D}_{1:k}=\diag(\tilde{d}_1,\ldots,\tilde{d}_k)$. Here, $\|\cdot\|_F$ denotes the Frobenius matrix norm. For standard eigenvalues of $H=J_{2n}M$, the norma\-li\-zed residual is given by  
	$\|H \tilde{V}-\tilde{V}\tilde{\Lambda}\|_F/\|H \tilde{V}\|_F$, 
	where the columns of  
	$\tilde{V}\in\mathbb{C}^{2n\times 2k}$ are the computed eigenvectors of~$H$ associated with the  eigenvalues on the diagonal of 
	\mbox{$\tilde{\Lambda}=\diag(\tilde{\lambda}_1,\ldots,\tilde{\lambda}_{2k})$}.

\subsection{A matrix with known symplectic eigenvalues} 
We consider the spd matrix  
$M=Q\diag(D,D)Q^T$ with $D=\diag(1,\ldots,n)$ and $Q=KL(n/5,1.2,-\sqrt{n/5})$,
where $L(n/5,1.2,-\sqrt{n/5})\in\Spn$ is the symplectic Gauss transformation defined in \cite{Fass01}, and $K=\left[\begin{smallmatrix}\enskip\Re(U) & \Im(U)\\ -\Im(U) & \Re(U)\end{smallmatrix}\right]\in\OrS(2n)$ with unitary $U\in\mathbb{C}^{n\times n}$
produced by orthogonalizing a~randomly generated complex matrix. 
Then, the $k$ smallest symplectic eigenvalues of $M$ are $1,\ldots,k$. To exhibit the accuracy of computed symplectic eigenvalues $\tilde{d}_1,\ldots,\tilde{d}_k$, we calculate the 1-norm error defined as $\sum_{i=1}^k|\tilde{d}_i-i|$. In our tests, we choose $k=5$ and consider different values of $n$ in the range between 100 and 2000. The mentioned errors and the corresponding residuals for the three methods are shown in Figure \ref{fig:Exampl1_err}. The sought eigenvalues for $n=2000$ are given in Table~\ref{tab:Exampl1_eigs}. 
\begin{figure}[htpb] 
	\centering
		\begin{minipage}{.5\textwidth}
			\includegraphics[width=\textwidth]{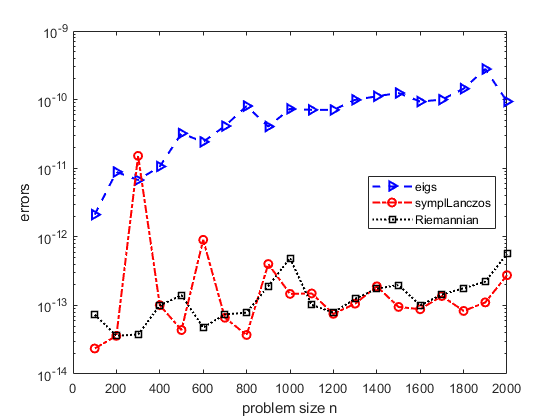}
		\end{minipage}
		\begin{minipage}{.5\textwidth}
			\includegraphics[width=\textwidth]{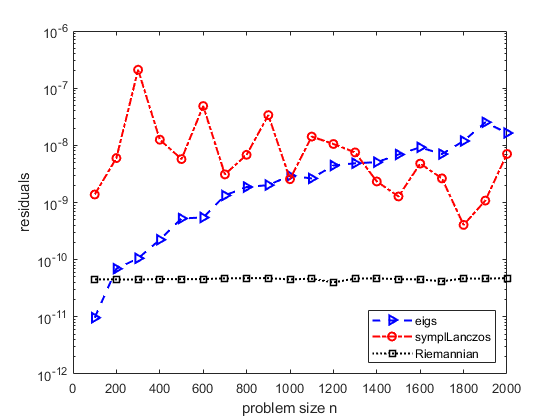} 
		\end{minipage}
	\caption{A matrix with known symplectic eigenvalues: the 1-norm errors of the computed symplectic eigenvalues (left) and the corresponding normalized residuals (right)}
	\label{fig:Exampl1_err}
\end{figure}

\begin{table}[tbhp]
	{\footnotesize
		\caption{5 smallest symplectic eigenvalues of a $4000\times 4000$ spd matrix $M$ computed by different methods}
		\label{tab:Exampl1_eigs}
		\begin{center}
			\begin{tabular}{c c c} \hline
				i$\times$\texttt{eigs}($H$) & symplLanczos($M$) & Riemannian($M$) \\ \hline
				\;0.000000000003296i + 1.000000000009247 & 1.000000000000058 & 1.000000000000008 \\
				-0.000000000022122i + 1.999999999995145  & 2.000000000000043 & 1.999999999999957 \\
				\;0.000000000015139i + 3.000000000002913 & 3.000000000000062 & 3.000000000000074 \\
				\;0.000000000023914i + 3.999999999977669 & 3.999999999999927 & 3.999999999999944  \\
				-0.000000000011256i + 4.999999999993021& 4.999999999999960 & 4.999999999999617 \\ \hline
			\end{tabular}
		\end{center}
	}
\end{table}

\subsection{Weakly damped gyroscopic systems}
In the stability analysis of gyroscopic systems, one needs to solve a~special quadratic eigenvalue problem (QEP) 
$(\lambda^2\mathcal{M} + \lambda\, \mathcal{G} + \mathcal{K})x = 0$, where $\mathcal{M}\in \SPD(n)$, $\mathcal{G}\in\skewset(n)$ and $\mathcal{K}\in\SPD(n)$ are, respectively, the mass, damping and stiffness matrices of the underlying mechanical structure. One can linearize this QEP and turn it into the standard EVP for the Hamiltonian matrix 
$$
H = \begin{bmatrix}
I&-\frac{1}{2}\mathcal{G}\\0& I
\end{bmatrix}
\begin{bmatrix}
0&-\mathcal{K}\\\mathcal{M}^{-1}& 0
\end{bmatrix}
\begin{bmatrix}
I&-\frac{1}{2}\mathcal{G}\\0& I
\end{bmatrix} = 
\begin{bmatrix}
-\frac{1}{2}\mathcal{G}\mathcal{M}^{-1} & 
\frac{1}{4}\mathcal{G}\mathcal{M}^{-1}\mathcal{G}-\mathcal{K}\\ 
\mathcal{M}^{-1} & -\frac{1}{2}\mathcal{M}^{-1}\mathcal{G}
\end{bmatrix},
$$
see \cite{BennFS08} for details. This leads to the fact that $J_{2n}^TH$
is symmetric negative definite if $\mathcal{G}$ is 
small enough. In our experiments, we use therefore the spd matrix 
$M=J_{2n}H$.

In the first test, we generate $\mathcal{M}$, $\mathcal{G}$ and $\mathcal{K}$ by an~eigenfunction discretization of a~wire saw model as described in \cite[Section~2]{WeiK00} with the wire speed 
$v = 0.0306$ and the dimension $n = 2000$ followed by a~scaling down of $\mathcal{G}$ by 1e-3. 
The eigenvalues computed by the three methods and the corresponding normalized residuals are given in Table~\ref{tab:Exampl2_eigs}. 

\begin{table}[tbhp]
	{\footnotesize
		\caption{5 smallest symplectic eigenvalues of a $4000\times 4000$ spd matrix $M=J_{2n}H$ generated from the wire saw model computed by different methods}
		\label{tab:Exampl2_eigs}
		\begin{center}
			\begin{tabular}{ l c c} \hline
\hspace*{25mm}i$\times$\texttt{eigs}($H$) & symplLanczos($M$) & Riemannian($M$) \\ \hline
\hspace*{7mm}0.000000000000002i +\, 3.140121476801627 & \;3.140121476801632 & \;3.140121476801794 \\			\hspace*{5mm} -0.000000000000001i +\, 6.280242953603250 & \;6.280242953603265 & \;6.280242953605164 \\
\hspace*{7mm}0.000000000000013i +\, 9.420364430404952 & \;9.420364430404895 &\, 9.420364430404506\\
\hspace*{7mm}0.000000000000037i +12.560485907206663	&12.560485907206548 & 12.560485907211794\\
\hspace*{5mm} -0.000000000000077i +15.700607384008093 &15.700607384008212 & 15.700607384223552\\ \hline
\!\!\!\!Residual: \hspace*{17mm} 1.4e-12 &1.7e-10 &1.3e-14 \\ \hline
			\end{tabular}
		\end{center}
	}
\end{table}

In the second test, we employ the data matrices $\mathcal{M}$ and $\mathcal{K}$
from a discretized model of a~piston rod inside a~combustion engine \cite{FehrGHFWE18}. This model has size 
$n = 8053$. Because matrix $\mathcal{G}$ in this model is not skew-symmetric, we replace it with a sparse  randomly generated skew-symmetric matrix whose pattern is the same as that of $\mathcal{M}$.
As the matrices in this model are large in magnitude, to improve the efficiency
of our method, we scale the matrix $H$ 
by a factor of 1e-5. The obtained results given in Table~\ref{tab:Exampl3_eigs} are for these scaled data.

\begin{table}[tbhp]
	{\footnotesize
		\caption{5 smallest symplectic eigenvalues of a $16106\times 16106$ spd matrix $M=J_{2n}H$ generated from the piston rod model computed by different methods}
		\label{tab:Exampl3_eigs}
		\begin{center}
			\begin{tabular}{ l c c} \hline
\hspace*{25mm}i$\times$\texttt{eigs}($H$) & symplLanczos($M$) & Riemannian($M$) \\ \hline
\hspace*{5mm} -0.000000000000001i + 0.162084145743768 & 0.162084145770035 & 0.162084145232661 \\
\hspace*{7mm}0.000000000000001i + 0.325674702254120 & 0.325674702270259 & 0.325674702005421 \\
\hspace*{7mm}0.000000000000006i + 0.663619676318176 & 0.663619676324319 & 0.663619676186475\\
\hspace*{7mm}0.000000000000001i + 1.350097974209022 & 1.350097974210526 & 1.350097974141396\\
\hspace*{5mm} -0.000000000000004i + 2.173559065028063 & 2.173559065366786 & 2.173559064987688\\ \hline
\!\!\!\! Residual: \hspace*{17mm} 4.4e-10 &7.6e-7 &9.9e-12 \\ \hline
			\end{tabular}
		\end{center}
	}
\end{table}

Some observations and remarks can be stated from these numerical examples. The comparisons might be a bit biased since \texttt{eigs} is not designed for structured matrices, whereas the symplectic Lanczos method and the Riemannian optimization method exploit the structure of the EVP. This explains why in all three test examples the eigenvalues computed by \texttt{eigs}($H$) are not purely imaginary.
Though, in the symplectic Lanczos method, the residuals, which also depend on the accuracy of the symplectic eigenvectors, are not as small as expected, the first example shows that this method produces good approximations to symplectic eigenvalues. Compared to that, our method yields satisfying results in the sense that both errors and residuals are small. It should however be noted that slow convergence, especially near minimizers, was sometimes experienced in our tests. This is well-known for first-order optimization methods and poses a~need for development of second-order methods.

\section{Conclusion}\label{Sec:Concl}
We have established various theoretical properties for the symplectic eigenvalue trace minimization problem. Many of them are symplectic extensions of known results for the standard problem. We have also proposed a Riemannian optimization-based numerical method that resorts to a recent development about optimization on the symplectic Stiefel manifold. This method can also be employed to compute standard eigenvalues 
of positive-definite Hamiltonian matrices. Nume\-ri\-cal examples demonstrate that the proposed method is
comparable to existing approaches in the sense of accuracy. 

\section*{Acknowledgments}
We would like to thank B.~Fr\"{o}hlich for providing us with the data for the piston rod model.

\appendix 
\section{Alternative proof of the necessity in Theorem \ref{prop:minimizers_Orthosympl}(ii)} \label{app:Anotherway} Theorem~\ref{prop:Critical_Orthosympl} is so strong that it does not only characterize the set of the critical points of the {minimization problem \eqref{eq:SymplOptimProb} but also helps to obtain the set of the global minimizers as clarified in Theorem~\ref{prop:minimizers_Orthosympl}(ii). In this extra section, we will present another proof of this theorem which
does not resort to Theorem~\ref{prop:Critical_Orthosympl} and its consequences. 

Let $X_*\in\Spkn$ be a minimizer of \eqref{eq:SymplOptimProb}. Then it satisfies the KKT condition \eqref{eq:KKTcond} or, equivalently, $	X_*^TMX_* = J_{2k}L_*$.
	Since $	X_*^TMX_*$ is spd,	an~application of Williamson's theorem implies the existence of 
	$K \in \Spk$ such that
	\begin{equation}\label{eq:lemproof4_2}
	K^TJ_{2k}L_*K = K^TX_*^TMX_*K=
	\begin{bmatrix} \Lambda & 0 \\ 0 & \Lambda\end{bmatrix}
	\end{equation}
	with $\Lambda=\diag(\lambda_1,\ldots,\lambda_k)$.
	
Next, we show that 
$\lambda_j=d_j$, $j=1,\ldots,k$. To this end, let us add more columns to $X_*$ to make 
	$\tilde{X}_* \in Sp(2n)$ such that its $1$-st, $\ldots$, $k$-th, $(n+1)$-th, $\ldots$, $(n+k)$-th columns are those of $X_*$, see \cite[Theorem~1.15]{deGo06}. It was shown in \cite[Proposition~8.14]{deGo06}, that the symplectic spectrum is symplectic invariant. 
	This yields that the symplectic eigenvalues of  $\tilde{X}_*^TM\tilde{X}_*$ are still $d_j, j = 1,\ldots,n$. Moreover, $X_*^TMX_*^{}$ is the so-called s-principal $2k\times 2k$ submatrix of $\tilde{X}_*^TM\tilde{X}_*^{}$, i.e., $X_*^TMX_*^{}$ is obtained from
	$\tilde{X}_*^TM\tilde{X}_*^{}$ by deleting its row and columns with the indices $k+1,\ldots, n, 
	n+k+1,\ldots, 2n$.
	From the symplectic analog of Cauchy's interlacing theorem \cite{KrbeTV14,BhatJ15}, we deduce that
	$d_j \leq \lambda_j$ for $j = 1,\ldots,k$.
	On the other hand, taking into account that
	$X_*$ is a global minimizer of \eqref{eq:SymplOptimProb}, we obtain
	$$
	2\sum_{j=1}^k \lambda_j=\tr(K^TJ_{2k}L_*K)=\min_{Y\in\Spk}\mathrm{tr}(Y^TX_*^TMX_*^{}Y)\leq \mathrm{tr}(X_*^TMX_*^{})
	= 2\sum_{j=1}^k d_j,
	$$
	and, hence,
$\lambda_j = d_j$ for $j=1,\ldots,k$.	
	Further, it follows from \eqref{eq:KKTcond} and \eqref{eq:lemproof4_2} that
	\begin{equation*}
	MX_*K = J_{2n}X_*KK^{-1}L_*	K=
	J_{2n}X_*K\begin{bmatrix}0&- \Lambda\\ \Lambda &0\end{bmatrix}.
	\end{equation*}
This implies that the columns of $X_{1:k}:=X_*K$ form a normalized symplectic eigenvector set associated with the symplectic eigenvalues $d_1,\ldots,d_k$. 

It remains to show that $K\in\OrS(2k)$. Define $F = K^{-1}$. Since $X_* = X_{1:k}F$ is a~global minimizer of \eqref{eq:SymplOptimProb}, it follows that
\begin{equation}\label{eq:theoproof1_1}
2\sum_{i=1}^k d_i = \tr (X_*^TMX_*)
= \tr (F^TX_{1:k}^TMX_{1:k}^{}F)	 = \tr (F^T\mathrm{diag}(\Lambda,\Lambda)F).
\end{equation}
We now express $F$ in the block form as
$
F = \left[\begin{smallmatrix}
A&B\\C&G
\end{smallmatrix}\right].
$ 
By Proposition~\ref{prop:SymplGroup}{(i)}, we have $F^T\in \Spk$. This	results in the following constraints for the submatrices
\begin{equation}\label{eq:SymplSubmatConstr}
AG^T - CB^T =I,\quad AB^T = BA^T,\quad CG^T = GC^T.
\end{equation}
Then the right-hand side of \eqref{eq:theoproof1_1} can be more detailed as
\begin{align*}
2\sum_{i=1}^{k} d_i &= \tr \bigl(F^T\mathrm{diag}(\Lambda,\Lambda)F\bigr)
= \tr (A^T\Lambda\,A + C^T\Lambda\,C + B^T\Lambda\,B + G^T\Lambda\,G)\\
&=\sum_{i=1}^k d_i\sum_{j=1}^{k}(a_{ij}^2 + g_{ij}^2 + c_{ij}^2 + b_{ij}^2)
\geq 2\sum_{i=1}^{k} d_i\sum_{j=1}^{k}(a_{ij}g_{ij} - c_{ij}b_{ij}) =2\sum_{i=1}^{k} d_i,
\end{align*}
where ``$\geq$" appears due to the facts that $(a_{ij}-g_{ij})^2 \geq 0$ and $(c_{ij}+b_{ij})^2 \geq 0$ for all $i,j=1,\ldots,k$, and the last equality follows from the first relation in \eqref{eq:SymplSubmatConstr}. The equality case happens if and only if 
$a_{ij}= g_{ij}$ and $c_{ij} = -b_{ij}$ for $i, j = 1,\ldots,k$.
Thus, $A=G$ and $C=-B$. Then by Proposition~\ref{prop:SymplGroup}{(ii)}, we obtain that $F\in \OrS(2k)$ and, hence, $K = F^{-1}\in \OrS(2k)$.
\hfill\proofbox

The last part of this proof is based on the ideas in \cite[Theorems~5, 6]{BhatJ15}. It is however more direct and does not invoke the notions of doubly stochastic and doubly superstochastic matrices. 

\section{Proof of Lemma~\ref{lem:TangentNull}}
\label{app:proofLemTangentNull}
We show the existence of $X_*$ only, as the proof for $X^*$ is similar. By Corollaries~\ref{corol:CriticalClasses} and \ref{corol:MimimizersClasses}, we can replace $X_0$ and $X_*$ by $X_{\mathcal{I}_k}K_0$ and $X_{1:k}K_*$, respectively, with some $K_0, K_* \in \OrS(2k)$ and $\mathcal{I}_k \subset \{1,\ldots,n\}$. Let us assume that this lemma holds for the  critical point $X_{\mathcal{I}_k}$, i.e., there exists a~global minimizer $X_*$ of \eqref{eq:SymplOptimProb} satisfying  
\begin{equation}\label{eq:ProofTangentNull_1}
X_*^TJ_{2n}X_{\mathcal{I}_k} + X_{\mathcal{I}_k}^TJ_{2n}X_* = 0.
\end{equation}
Then we have
\begin{align*}
(X_*K_0)^TJ_{2n}X_0 + X_0^TJ_{2n}(X_*K_0) &= K_0^TX_*^TJ_{2n}X_{\mathcal{I}_k}K_0 + K_0^TX_{\mathcal{I}_k}^TJ_{2n}X_*K_0\\
&= K_0^T(X_*^TJ_{2n}X_{\mathcal{I}_k} + X_{\mathcal{I}_k}^TJ_{2n}X_*)K_0 =0.
\end{align*}
This means that
$X_*K_0$ is the sought global minimizer corresponding to $X_0=X_{\mathcal{I}_k}K_0$. 

We now prove the above assumption. Our goal is to construct $K_*\in \OrS(2k)$ such that
	$X_*=X_{1:k}K_*$ satisfies \eqref{eq:ProofTangentNull_1} and is the global minimizer of \eqref{eq:SymplOptimProb}. 
Let $O = X_{1:k}^TJ_{2n}X_{\mathcal{I}_k}$ for $\mathcal{I}_k=\{i_1,\ldots, i_k\} \subset \{1,\ldots,n\}$. We can see that $O$ can be written in the block form as
\begin{equation*}
O = \begin{bmatrix}
0 & O_1\\-O_1& 0
\end{bmatrix}\in \Rbkk,
\end{equation*} 
where $O_1 = [x_1,\dots,x_k]^TJ_{2n}[x_{n+i_1},\dots,x_{n+i_k}] \in \mathbb{R}^{k\times k}$. 
Let $c$ denote the number of common indices $\{1,\dots,k\} \cap \mathcal{I}_k$ with $0 \leq c \leq k$. Taking Proposition~\ref{prop:SymplGroup}(ii) into account, we are searching for $K_*\in \OrS(2k)$ of the form
$$
K_* = \begin{bmatrix}
\enskip K_1&K_2\\-K_2&K_1
\end{bmatrix},
$$
where $K_1$ and $K_2$ satisfy 
\begin{align}
K_1^TK_2 & = K_2^TK_1, \qquad K_1^TK_1 + K_2^TK_2 = I, \label{eq:cond1} \\
K_1^TO_1 & = - O_1^TK_1,\quad\, K_2^TO_1 = O_1^TK_2. \label{eq:cond2}
\end{align}
The conditions \eqref{eq:cond1} guarantee the orthosymplecticity of $K_*$, whereas the conditions \eqref{eq:cond2} imply \eqref{eq:ProofTangentNull_1}. By definition, $O_1$ contains exactly $c$ 1's. Let us denote their positions by $(i_1,j_1),\dots,(i_c,j_c)$. We moreover choose other $k-c$ positions $(i_{c+1},j_{c+1}),\dots,(i_{k},j_k)$ in such a~way that 
if we put 1 in $O_1$ at all these positions, then the resulting matrix becomes a~permutation of the identity. 
Let us note that while the set $(i_1,j_1),\dots,(i_c,j_c)$ is fixed upon the given matrix $O_1$, there are multiple choices for $(i_{c+1},j_{c+1}),\dots,(i_{k},j_k)$. We will construct $K_*$ as follows: 
\begin{equation*}
(K_1)_{ij} = \begin{cases}
\cos\phi_l, &\mbox{if } (i,j,l) \in \{(i_{c+1},j_{c+1},c+1),\dots,(i_{k},j_k,k)\},\\
0, &\mbox{otherwise},
\end{cases}
\end{equation*}
\begin{equation*}
(K_2)_{ij} = \begin{cases}
1,&\mbox{if } (i,j) \in \{(i_{1},j_{1}),\dots,(i_{c},j_c)\},\\
\sin\phi_l, &\mbox{if } (i,j,l) \in \{(i_{c+1},j_{c+1},c+1),\dots,(i_{k},j_k,k)\},\\
0, &\mbox{otherwise},
\end{cases}
\end{equation*}
with $\phi_l \in \mathbb{R}$. 
Note that we can use $-1$ instead of $1$ in $K_2$. One directly verifies that
	\begin{align*}
	 (K_1^TK_2)_{ij} &= (K_2^TK_1)_{ij}\\
	&= \begin{cases}
	\cos\phi_l\sin\phi_l,&\mbox{if } (i,j,l) \in \{(c+1,c+1,c+1),\dots,(k,k,k)\},\\
	0, &\mbox{otherwise},
	\end{cases} \\
	(K_1^TK_1)_{ij} & = \begin{cases}
	\cos^2\phi_l,&\mbox{if } (i,j,l) \in \{(c+1,c+1,c+1),\dots,(k,k,k)\},\\
	0, &\mbox{otherwise},
	\end{cases} \\
	(K_2^TK_2)_{ij} & = \begin{cases}
	1,&\mbox{if } (i,j) \in \{(1,1),\dots,(c,c)\},\\
	\sin^2\phi_l, &\mbox{if } (i,j,l) \in \{(c+1,c+1,c+1),\dots,(k,k,k)\},\\
	0, &\mbox{otherwise}.
	\end{cases}
	\end{align*}
	and, hence, the relations in \eqref{eq:cond1} are satisfied. Furthermore, we have
	\begin{align*} 
	(K_1^TO_1)_{ij} & = (-O_1^TK_1)_{ij} = 0, \quad i,j=1,\ldots,k, \\
	(K_2^TO_1)_{ij} & = (O_1^TK_2)_{ij} 
	= \begin{cases}
	1,&\mbox{if } (i,j) \in \{(1,1),\dots,(c,c)\},\\
	0, &\mbox{otherwise}
	\end{cases}
	\end{align*}
implying the relations in \eqref{eq:cond2}. 
\hfill\proofbox

Though covered in the proof, we still want to show two special cases of $c$. If $c=0$, then $O = 0$ and, hence, we can choose any $K_* \in \OrS(2k)$. If $c=k$, i.e., $X_0$ is a~minimizer, then $O_1 = I$. In this case, we can take, for example, $K_*=J_{2k}$.

\bibliographystyle{siamplain}
\bibliography{references}
\end{document}